\newtheorem{Theorem}{Theorem}[section]
\newtheorem{Lemma}[Theorem]{Lemma}
\theoremstyle{definition}
\newtheorem{Example}[Theorem]{Example}
\newtheorem{Remark}[Theorem]{Remark}
\DeclareMathOperator{\codim}{codim}
\newcommand{\comment}[1]{}
\begin{document}

\title[Approximation of positive closed currents on K\"ahler manifolds]{On the approximation of positive closed currents on compact K\"ahler manifolds}
\author{Dan Coman \and George Marinescu}
\thanks{D. Coman is partially supported by the NSF Grant DMS-1300157}
\thanks{G. Marinescu is partially supported by SFB TR 12}
\subjclass[2010]{Primary 32L10; Secondary 32U40, 32W20, 53C55}
\date{January 24, 2013}
\address{D. Coman: Department of Mathematics, Syracuse University, Syracuse, NY 13244-1150, USA}\email{dcoman@syr.edu}
\address{G. Marinescu: Universit\"at zu K\"oln, Mathematisches Institut, Weyertal 86-90, 50931 K\"oln, Germany
    \& Institute of Mathematics `Simion Stoilow', Romanian Academy,
Bucharest, Romania}\email{gmarines@math.uni-koeln.de}

\pagestyle{myheadings}

\begin{abstract} Let $L$ be a holomorphic line bundle over a compact K\"ahler manifold $X$ endowed with a singular Hermitian metric $h$ with curvature current $c_1(L,h)\geq0$. In certain cases when the wedge product $c_1(L,h)^k$ is a well defined current for some positive integer $k\leq\dim X$, we prove that $c_1(L,h)^k$ can be approximated by averages of currents of integration over the common zero sets of $k$-tuples of holomorphic sections over $X$ of the high powers $L^p:=L^{\otimes p}$.

\par In the second part of the paper we study the convergence of the Fubini-Study currents and the equidistribution of zeros of $L^2$-holomorphic sections of the adjoint bundles $L^p\otimes K_X$, where $L$ is a holomorphic line bundle over a complex manifold $X$ endowed with a singular Hermitian metric $h$ with positive curvature current. As an application, we obtain an approximation theorem for the current $c_1(L,h)^k$ using currents of integration over the common zero sets of $k$-tuples of sections of $L^p\otimes K_X$.
\end{abstract}

\maketitle

\section{Introduction}

Let $L$ be a holomorphic line bundle over the compact K\"ahler manifold $(X,\Omega)$ and let $n=\dim X$. Given a holomorphic section $\sigma\in H^0(X,L^p)$, where $L^p:=L^{\otimes p}$, we denote by $[\sigma=0]$ the current of integration (with multiplicities) over the analytic hypersurface $\{\sigma=0\}\subset X$. We define $\mathcal A_k(L^p)$, $k\leq n$, to be the space of positive closed currents $R$ of bidegree $(k,k)$ on $X$ of the form
\begin{equation}\label{e:aci}
R=\frac{1}{p^kN}\,\sum_{\ell=1}^N\,[\sigma_{\ell,1}=0]\wedge\ldots\wedge[\sigma_{\ell,k}=0]\,.
\end{equation}
Here $N\in\mathbb N$ and $\sigma_{\ell,j}\in H^0(X,L^p)$ are such that $[\sigma_{\ell,1}=0]\wedge\ldots\wedge[\sigma_{\ell,k}=0]$ is a well defined positive closed current on $X$ in the sense of \cite{D93,FS95}. Recall that in this case the set $\{\sigma_{\ell,i_1}=0\}\cap\ldots\cap\{\sigma_{\ell,i_j}=0\}$ has pure dimension $n-j$ for every $i_1<\ldots<i_j$ in $\{1,\dots,k\}$ and the current $[\sigma_{\ell,1}=0]\wedge\ldots\wedge[\sigma_{\ell,k}=0]$ is equal to the current of integration with multiplicities along the analytic set $\{\sigma_{\ell,1}=0\}\cap\ldots\cap\{\sigma_{\ell,k}=0\}$ \cite[Corollary 2.11, Proposition 2.12]{D93}.
Building on our previous work \cite{CM11} we will prove the following approximation result for wedge products of integral positive closed currents by currents of integration along the common zero sets of $k$-tuples of holomorphic sections of high powers of $L$.

\begin{Theorem}\label{T:approx} Let $(X,\Omega)$ be a compact K\"ahler manifold and $(L,h_0)$ be a holomorphic line bundle on $X$ endowed with a singular Hermitian metric $h_0$ with strictly positive curvature current, i.e. $c_1(L,h_0)\geq c\,\Omega$ on $X$, for some constant $c>0$. Assume that $h_0$ is continuous on $X\setminus\Sigma$, where $\Sigma$ is an analytic subset of $X$. If $k\leq\codim\Sigma$ and $h$ is a singular Hermitian metric on $L$ with curvature current $c_1(L,h)\geq0$ on $X$ and with locally bounded weights on $X\setminus\Sigma$, then there exists a sequence of currents $R_j\in\mathcal A_k(L^{p_j})$, where $p_j\nearrow\infty$, such that $R_j$ converges weakly on $X$ to $c_1(L,h)^k$.
\end{Theorem}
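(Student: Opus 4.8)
The plan is to realize $c_1(L,h)^k$ as the limit of the $k$-fold self-intersections of the Fubini--Study currents attached to the spaces $H^0_{(2)}(X,L^p)$ of $L^2$-holomorphic sections of $L^p$ with respect to $h^p$ and the volume form $\Omega^n/n!$, and then to discretize the resulting expectation into an element of $\mathcal A_k(L^{p})$. First I would fix, for each $p$, an orthonormal basis $S^p_1,\dots,S^p_{d_p}$ of $H^0_{(2)}(X,L^p)$, let $P_p=\sum_j|S^p_j|^2_{h^p}$ be the Bergman kernel function, and let $\gamma_p$ be the Fubini--Study current, so that locally $\gamma_p=dd^c u_p$ with $u_p=\frac{1}{2p}\log\sum_j|s^p_j|^2$, the $s^p_j$ being the local frame representatives. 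The reference metric $h_0$ enters precisely here: since $c_1(L,h_0)\geq c\,\Omega$ and $h_0$ is continuous off $\Sigma$, an Ohsawa--Takegoshi / Demailly peak-section construction gives, for $p$ large, a lower bound $P_p\geq c_p>0$ on compact subsets of $X\setminus\Sigma$ with $\frac1p\log c_p\to 0$, while the submean inequality gives matching upper bounds; the base locus of $H^0_{(2)}(X,L^p)$ is then contained, for $p$ large, in an analytic set whose intersection with $X\setminus\Sigma$ has codimension $\geq k$. Combined with the results of \cite{CM11} this yields $\frac1p\gamma_p\to c_1(L,h)$ weakly on $X$, with $\frac1p u_p$ converging to the local weight $\varphi$ of $h$ in $L^1_{\mathrm{loc}}$ and locally uniformly on $X\setminus\Sigma$.

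The second ingredient is a probabilistic intersection formula. Equip $H^0_{(2)}(X,L^p)$ with the Gaussian measure (or the uniform measure on the unit sphere) induced by the $L^2$ inner product; by the Poincar\'e--Lelong formula the expected zero current of a single random section is $\mathbf E\bigl[[\sigma=0]\bigr]=\gamma_p$. Because $u_p$ is locally bounded (indeed real-analytic) off the base locus, one can intersect: taking $\sigma_1,\dots,\sigma_k$ independent, a Fubini-type argument using Bedford--Taylor theory and the genericity of proper intersection shows that almost surely $[\sigma_1=0]\wedge\dots\wedge[\sigma_k=0]$ is a well-defined positive closed current belonging to the system defining $\mathcal A_k(L^p)$, and that
\[
\mathbf E\bigl[[\sigma_1=0]\wedge\dots\wedge[\sigma_k=0]\bigr]=\gamma_p^{\,k}
\]
as currents on $X$, the right-hand side being well defined since $k\leq\codim\Sigma$ and $\gamma_p$ has locally bounded potentials on $X\setminus\Sigma$ off the (codimension $\geq k$) base locus.

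Next I would pass to the limit $p\to\infty$. The convergence $\frac1p u_p\to\varphi$ locally uniformly on $X\setminus\Sigma$, together with the continuity theorem of Bedford--Taylor, gives $\frac1{p^k}\gamma_p^{\,k}\to c_1(L,h)^k$ on $X\setminus\Sigma$; the hypothesis $k\leq\codim\Sigma$ ensures, as in \cite{D93}, that neither $\frac1{p^k}\gamma_p^{\,k}$ nor $c_1(L,h)^k$ carries mass on $\Sigma$, so the convergence holds weakly on all of $X$. Finally, for each fixed $p$, the set of positive closed $(k,k)$-currents lying in the cohomology class of $\frac1{p^k}\gamma_p^k$ is weakly compact and metrizable; since $\frac1{p^k}\gamma_p^{\,k}$ is the barycenter of the law of $\frac1{p^k}[\sigma_1=0]\wedge\dots\wedge[\sigma_k=0]$, a law-of-large-numbers (equivalently, a compactness plus Chebyshev) argument produces, for a suitable finite sample $\{\sigma_{\ell,j}\}_{1\leq\ell\leq N,\,1\leq j\leq k}$, an average
\[
R_p=\frac1{p^kN}\sum_{\ell=1}^N[\sigma_{\ell,1}=0]\wedge\dots\wedge[\sigma_{\ell,k}=0]\in\mathcal A_k(L^p)
\]
lying within distance $1/p$ of $\frac1{p^k}\gamma_p^{\,k}$ in a fixed metric for the weak topology; taking $p=p_j\nearrow\infty$ then gives $R_j\to c_1(L,h)^k$.

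The main obstacle is the passage to the limit in the self-intersections with the correct mass control near $\Sigma$: one must establish that $\frac1p u_p$ converges locally uniformly on $X\setminus\Sigma$ (not merely in $L^1_{\mathrm{loc}}$) and that the intersection products neither lose mass nor develop spurious mass along $\Sigma$ as $p\to\infty$. This is exactly where the three hypotheses -- $k\leq\codim\Sigma$, the continuity of $h_0$ off $\Sigma$ (controlling $P_p$ from below, hence the base locus and the uniform convergence), and the local boundedness of the weights of $h$ off $\Sigma$ -- are used in an essential way, through the Bergman-kernel estimates of \cite{CM11} and the Bedford--Taylor continuity theorem. A secondary point demanding care is verifying the genericity needed so that the chosen $k$-tuples actually satisfy the proper-intersection condition built into the definition of $\mathcal A_k(L^p)$, which again relies on the bigness of $(L,h_0)$ and the smallness of the base locus off $\Sigma$.
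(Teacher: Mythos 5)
Your second half (the probabilistic intersection formula $\mathbf E\bigl[[\sigma_1=0]\wedge\dots\wedge[\sigma_k=0]\bigr]=\gamma_p^k$ followed by a law-of-large-numbers discretization into an average $R_p\in\mathcal A_k(L^p)$ close to $\frac{1}{p^k}\gamma_p^k$ against finitely many test forms) is exactly the paper's Step 2, and that part is fine. The genuine gap is in the first half: you run the Bergman-kernel/Fubini--Study analysis directly for the metric $h$, but the hypotheses of the theorem only give $c_1(L,h)\geq 0$ and \emph{locally bounded} (not continuous) weights off $\Sigma$, whereas the results of \cite{CM11} you invoke require the metric whose Bergman kernel is being analyzed to be continuous on $X\setminus\Sigma$ with strictly positive curvature. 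Your claim that $\frac1p u_p\to\varphi$ \emph{locally uniformly} on $X\setminus\Sigma$ cannot hold in general: off the base locus the potentials $u_p$ are continuous, so locally uniform convergence would force $\varphi$ to be continuous there, which is not assumed. Nor does $L^1_{\mathrm{loc}}$ convergence of potentials suffice to pass to the limit in the $k$-fold products $\gamma_p^k$ (Bedford--Taylor continuity needs monotone or locally uniform convergence). Moreover, your use of $h_0$ to produce peak sections and lower bounds for $P_p$ is problematic: the Ohsawa--Takegoshi/H\"ormander step needs strict positivity of the weight actually used in the $L^2$ norm, and twisting with $h_0^{p_0}$ produces sections whose $h^p$-norm is not controlled (no comparison between the weights of $h$ and $h_0$ near $\Sigma$ is assumed), so even the $L^1_{\mathrm{loc}}$ statement $\frac1p\log P_p\to 0$ is not justified for $h$ itself.

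What is missing is the paper's Step 3, where $h_0$ really enters. One first regularizes: writing $c_1(L,h)=\theta+dd^c\varphi$, the Demailly--Paun theorem \cite[Theorem 3.2]{DP04} gives qpsh functions $\varphi_j\searrow\varphi$, smooth on $X\setminus\Sigma$, with $\theta+dd^c\varphi_j\geq-\varepsilon_j\Omega$; the loss of positivity is repaired by mixing with the potential $\varphi_0$ of $h_0$, setting $\widetilde\varphi_j=(1-\varepsilon_j/c)\varphi_j+(\varepsilon_j/c)\varphi_0$, which yields metrics $h_j$ that are continuous on $X\setminus\Sigma$ with $c_1(L,h_j)\geq(\varepsilon_j^2/c)\,\Omega$. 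Since $k\leq\codim\Sigma$, the local potentials have small unbounded locus, so by \cite[Corollary 2.11, Proposition 2.9]{D93} the products are well defined and $c_1(L,h_j)^k\to c_1(L,h)^k$ weakly (decreasing sequences, not $L^1$ limits). Only then is the Fubini--Study convergence $\frac{1}{p^k}\gamma_p^k\to c_1(L,h_j)^k$ of \cite[Theorem 5.4]{CM11} applied -- to each regularized metric $h_j$, not to $h$ -- and the conclusion follows by combining this with your (correct) discretization step and a diagonal argument using the uniform mass bound \eqref{e:mass}. Without this regularization-and-mixing step your argument does not reach the stated generality of the theorem.
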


\par The hypothesis on $h$ means the following: if $U_\alpha$ is an open set of $X$ on which there exists a local holomorphic frame $e_\alpha$ of $L$ and $|e_\alpha|_h=e^{-\psi_\alpha}$, then $\psi_\alpha$ is plurisubharmonic (psh) on $U_\alpha$ and locally bounded on $U_\alpha\setminus\Sigma$. Note that in Theorem \ref{T:approx} the line bundle $L$ is big and $X$ is Moishezon \cite{JS93}. Hence $X$ is a projective manifold, since it is assumed to be K\"ahler (see e.\,g.\ \cite[Theorem 2.2.26]{MM07}).

\par The proof relies on the convergence of Fubini-Study currents obtained in \cite[Theorem 5.4]{CM11} . This follows in turn from the weak asymptotic behavior of the Bergman kernel, $\frac{1}{p}\log P_p\to 0$ in $L^1_{loc}$, see \cite[Theorems 1.1, 1.2]{CM11}. For equidistribution results concerning some non-compact manifolds see \cite{DMS}, where the full asymptotic expansion of the Bergman kernel \cite[Theorem 6.1.1]{MM07} \cite[Theorem 3.11]{MM08} is used.

\par In the case that the line bundle $L$ is {\em positive} (i.\,e.\ it admits a smooth Hermitian metric with positive curvature), one has the following theorem in which the approximation is achieved by using currents of integration over common zero sets of $k$-tuples of sections rather than averages of such.

\begin{Theorem}\label{T:approx2} Let $X$ be a compact K\"ahler manifold of dimension $n$ and $L$ be a positive holomorphic line bundle on $X$. Assume that $h$ is a singular Hermitian metric on $L$ with positive curvature current $c_1(L,h)\geq0$ such that the current $c_1(L,h)^k$ is well defined for some $k\leq n$. Then there exist a sequence of integers $p_j\nearrow\infty$ and sections $\sigma_{j,1},\dots,\sigma_{j,k}\in H^0(X,L^{p_j})$ such that $T_j:=[\sigma_{j,1}=0]\wedge\ldots\wedge[\sigma_{j,k}=0]$ are well defined positive closed currents of bidegree $(k,k)$ and $p_j^{-k}\,T_j\to c_1(L,h)^k$ weakly on $X$.
\end{Theorem}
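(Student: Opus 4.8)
Fix a smooth Hermitian metric $h_0$ on $L$ with $c_1(L,h_0)=\omega_0>0$ and write $h=h_0e^{-2\varphi}$, so that $c_1(L,h)=\omega_0+dd^c\varphi$ with $\varphi$ quasi-psh. Equip each $H^0(X,L^p)$ with the $L^2$-inner product induced by $h^p$ and $\Omega^n/n!$, let $P_p$ be the Bergman kernel function and let $\gamma_p=\Phi_p^*\omega_{FS}$ be the associated Fubini-Study current, where $\Phi_p\colon X\to\mathbb P\big(H^0(X,L^p)^*\big)$ is the Kodaira map (an embedding for $p$ large, since $L$ is positive); thus $\tfrac1p\gamma_p=c_1(L,h)+\tfrac1{2p}dd^c\log P_p$. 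Since $L$ is positive the hypotheses of Theorem~\ref{T:approx} hold with $\Sigma=\emptyset$, and in fact \cite[Theorem 5.4]{CM11} gives, in the present setting, $\tfrac1{p^k}\gamma_p^k\to c_1(L,h)^k$ weakly on $X$. Note that $\tfrac1{p^k}\gamma_p^k$, $\tfrac1{p^k}[\sigma_1=0]\wedge\ldots\wedge[\sigma_k=0]$ and $c_1(L,h)^k$ all lie in the cohomology class $c_1(L)^k$, so that once the currents of integration below are shown to converge against a dense family of test forms, their weak convergence follows from the resulting uniform mass bound.

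Next I would introduce the integral-geometric (Crofton) representation of $\gamma_p^k$. Let $\nu_p$ denote the Fubini-Study probability measure on $\big(\mathbb PH^0(X,L^p)\big)^k$ and, for $\mathbf a=(a_1,\dots,a_k)$, let $Z_{\mathbf a}=\{\sigma_{a_1}=0\}\cap\ldots\cap\{\sigma_{a_k}=0\}$. Since $L^p$ is very ample for $p$ large, Bertini's theorem shows that $Z_{\mathbf a}$ has pure codimension $k$ — hence $[\sigma_{a_1}=0]\wedge\ldots\wedge[\sigma_{a_k}=0]$ is a well-defined positive closed $(k,k)$-current, equal to $[Z_{\mathbf a}]$ — for $\mathbf a$ outside a $\nu_p$-null set, and the Crofton formula yields $\gamma_p^k=\int[Z_{\mathbf a}]\,d\nu_p(\mathbf a)$. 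In particular, for every smooth $(n-k,n-k)$-form $\phi$,
\[
\Big\langle\tfrac1{p^k}\gamma_p^k,\phi\Big\rangle=\int\Big\langle\tfrac1{p^k}[Z_{\mathbf a}],\phi\Big\rangle\,d\nu_p(\mathbf a).
\]

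The heart of the argument is a variance estimate: I would show that for each fixed test form $\phi$,
\[
\mathrm{Var}_{\nu_p}\!\Big(\big\langle\tfrac1{p^k}[Z_{\mathbf a}],\phi\big\rangle\Big)\longrightarrow 0\qquad(p\to\infty).
\]
For $k=1$, writing $\sigma_a=\sum_ju_jS^p_j$ with $u$ a standard complex Gaussian vector, one has $|\sigma_a(x)|^2_{h^p}=P_p(x)\,|\langle u,\eta_p(x)\rangle|^2$ for a unit vector $\eta_p(x)$, whence $\tfrac1p\log|\sigma_a|^2_{h_0^p}=2\varphi+\tfrac1p\log P_p+\tfrac1p\log|\langle u,\eta_p(x)\rangle|^2$; the covariance of the last term is governed by $|\langle\eta_p(x),\eta_p(y)\rangle|=|P_p(x,y)|/\sqrt{P_p(x)P_p(y)}$, and combining the reproducing identity $\int|P_p(x,y)|^2\,dV(y)=P_p(x)$, the dimension bound $\dim H^0(X,L^p)=O(p^n)$, and $\tfrac1p\log P_p\to 0$ from \cite[Theorems 1.1, 1.2]{CM11} gives the estimate. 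For $k>1$ I would argue by induction, slicing a generic intersection $[Z_{(a_1,\dots,a_{k-1})}]$ by the last divisor and applying the case $k=1$ on the slices; here the positivity of $L$ is used both to guarantee (via Bertini and the $L^2$-estimates for $\db$) that generic slices remain proper intersections and to keep the slice masses from concentrating. This step is the main obstacle, because wedge products of currents are not continuous for the weak topology, so it does not suffice that a single divisor equidistribute — one must control the fluctuations of the full iterated intersection around its mean $\gamma_p^k$.

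Granting the variance estimate, the theorem follows by diagonalization. Fix a countable family $\{\phi_i\}$ dense among smooth $(n-k,n-k)$-forms and a sequence $\varepsilon_j\searrow 0$. Using $\tfrac1{p^k}\gamma_p^k\to c_1(L,h)^k$ together with Chebyshev's inequality applied to the variance bound, choose $p_j\nearrow\infty$ so large that $\big|\big\langle\tfrac1{p_j^k}\gamma_{p_j}^k-c_1(L,h)^k,\phi_i\big\rangle\big|<\varepsilon_j$ for $i\le j$ and that the set of $\mathbf a$ for which either $Z_{\mathbf a}$ fails to be a proper intersection or $\big|\big\langle\tfrac1{p_j^k}[Z_{\mathbf a}]-\tfrac1{p_j^k}\gamma_{p_j}^k,\phi_i\big\rangle\big|\ge\varepsilon_j$ for some $i\le j$ has $\nu_{p_j}$-measure $<1$. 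Any $\mathbf a^{(j)}$ avoiding this set provides sections $\sigma_{j,1},\dots,\sigma_{j,k}\in H^0(X,L^{p_j})$ with $T_j=[\sigma_{j,1}=0]\wedge\ldots\wedge[\sigma_{j,k}=0]$ a well-defined positive closed $(k,k)$-current satisfying $\big|\big\langle\tfrac1{p_j^k}T_j-c_1(L,h)^k,\phi_i\big\rangle\big|<2\varepsilon_j$ for $i\le j$; by the uniform mass bound and density of $\{\phi_i\}$ this gives $\tfrac1{p_j^k}T_j\to c_1(L,h)^k$ weakly on $X$.
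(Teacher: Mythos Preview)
Your proposal has a genuine gap at the very first step, and the paper's argument shows how to avoid it.

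You write that ``the hypotheses of Theorem~\ref{T:approx} hold with $\Sigma=\emptyset$'' and that \cite[Theorem 5.4]{CM11} then yields $\tfrac1{p^k}\gamma_p^k\to c_1(L,h)^k$. But both results require the metric whose $L^2$-sections you are using to be \emph{continuous} on $X\setminus\Sigma$ (and, in Theorem~\ref{T:approx}, to have locally bounded weights there). With $\Sigma=\emptyset$ this forces $h$ to have bounded weights on all of $X$, which is \emph{not} assumed in Theorem~\ref{T:approx2}: the only hypothesis on $h$ is that $c_1(L,h)^k$ is well defined in the Bedford--Taylor sense, and this allows, for instance, $\varphi$ to have analytic poles along sets of codimension~$1$ already when $k=1$. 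For such $h$ the space $H^0_{(2)}(X,L^p)$ relative to $h^p$ can be strictly smaller than $H^0(X,L^p)$ (so $\Phi_p$ need not be an embedding and your Bertini/Crofton step is in trouble), and the locally uniform convergence $\tfrac1p\log P_p\to0$ that underlies \cite[Theorem 5.4]{CM11} is not available. The same issue contaminates your variance sketch for $k=1$, which implicitly uses this convergence; and the inductive step for $k>1$ is only asserted, not carried out---this is precisely the hard part, as you note yourself.

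The paper bypasses both difficulties by \emph{regularizing first}. One takes a smooth positive metric $h_0$ with $\Omega=c_1(L,h_0)$ K\"ahler, writes $h$ via an $\Omega$-psh function $\varphi$, and uses the B\l ocki--Ko\l odziej regularization \cite{BK07} to get smooth $\Omega$-psh $\varphi_j\searrow\varphi$; a small rescaling $\psi_j=\tfrac{j}{j+1}\varphi_j$ makes each $c_1(L,h_j)$ a genuine K\"ahler form. For each smooth positively curved $h_j$ the equidistribution of common zeros is already a theorem of Dinh--Sibony \cite[Th\'eor\`eme 7.3]{DS06b} (or Shiffman--Zelditch \cite{ShZ08}), so one gets sections whose normalized intersection currents approximate $c_1(L,h_j)^k$. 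Continuity of the Monge--Amp\`ere operator along decreasing sequences gives $c_1(L,h_j)^k\to c_1(L,h)^k$, and the same diagonal argument you use in your last paragraph finishes the proof. In short: rather than proving a singular-metric variance estimate from scratch, the paper reduces to the smooth case where it is already known.
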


\par Theorem \ref{T:approx2} is a consequence of the equidistribution results of common zeros of $k$-tuples of holomorphic sections \cite{DS06b,ShZ99,ShZ08} and the regularization results of \cite{D92,GZ05,BK07}. The hypothesis on the wedge product current $c_1(L,h)^k$ is that it is well defined locally, in the sense of Bedford and Taylor \cite{BT76,BT82}. We recall briefly its definition at the beginning of Section \ref{S:pfat}.

\par Theorem \ref{T:approx2} for $k=1$ is due to Demailly (\cite[Proposition 9.1]{D93b}, see also \cite[Th\'eor\`eme 1.9]{D82I}). 
In the case when $X$ is a projective homogeneous manifold or when $X\setminus{\rm supp}\,c_1(L,h)$ satisfies a certain convexity property it was shown in \cite[Theorems 0.5, 1.6]{G99} that the approximating sections $\sigma_j$ can be chosen is such a way that $\{\sigma_j=0\}$ converge in the Hausdorff metric to the support of $c_1(L,h)$. Such an approximation theorem with control on the supports was first proved in \cite{DS95} for positive closed currents of bidegree (1,1) on pseudoconvex domains in ${\mathbb C}^n$.

\par When $k=1$, Theorem \ref{T:approx2} holds in fact in the more general case of a {\em big} line bundle with an arbitrary positively curved singular Hermitian metric.

\begin{Theorem}\label{T:approx3} Let $L$ be a big line bundle over the compact K\"ahler manifold $X$ and $h$ be a singular Hermitian metric on $L$ with positive curvature current $c_1(L,h)\geq0$. Then there exists a sequence of sections $\sigma_j\in H^0(X,L^{p_j})$, where $p_j\nearrow\infty$, such that $p_j^{-1}\,[\sigma_j=0]\to c_1(L,h)$ weakly on $X$ as $j\to\infty$.
\end{Theorem}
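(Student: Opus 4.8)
The plan is to reduce, by Demailly's regularization and a diagonal argument, to the case where $h$ has analytic singularities and $c_1(L,h)$ is a K\"ahler current---which puts us in the scope of Theorem~\ref{T:approx} with $k=1$---and then to replace averages of zero divisors by a single zero divisor via the equidistribution of zeros of random holomorphic sections. First I would carry out the reduction. Since $L$ is big, there is a singular Hermitian metric $h_0$ on $L$ with analytic singularities and $c_1(L,h_0)\ge\varepsilon_0\,\Omega$ for some $\varepsilon_0>0$. By Demailly's regularization on the compact K\"ahler manifold $X$ (\cite{D92}, see also \cite{GZ05,BK07}) there are singular Hermitian metrics $h_m$ on $L$ with analytic singularities, weights converging in $L^1_{loc}$ to those of $h$, and $c_1(L,h_m)\ge-\varepsilon_m\,\Omega$ with $\varepsilon_m\searrow0$; in particular $c_1(L,h_m)\to c_1(L,h)$ weakly. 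Put $t_m:=2\varepsilon_m/(\varepsilon_0+\varepsilon_m)$, so $t_m\in(0,1)$ for $m$ large and $t_m\to0$, and set $\wi h_m:=h_m^{\,1-t_m}\otimes h_0^{\,t_m}$; then $c_1(L,\wi h_m)=(1-t_m)c_1(L,h_m)+t_m\,c_1(L,h_0)\ge\varepsilon_m\,\Omega>0$, while $\wi h_m$ still has analytic singularities along a proper analytic subset $\Sigma_m\subset X$ and $c_1(L,\wi h_m)\to c_1(L,h)$ weakly. Since weak convergence of currents on the compact manifold $X$ is metrizable, it suffices to prove the theorem with $h$ replaced by each $\wi h_m$; thus I may assume from now on that $h$ has analytic singularities along a proper analytic subset $\Sigma\subset X$ and $c_1(L,h)\ge c\,\Omega$ for some $c>0$.

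In this situation $h$ has locally bounded weights on $X\setminus\Sigma$, so the hypotheses of Theorem~\ref{T:approx} hold with $k=1$ (using the auxiliary metric $h_0$ above, after enlarging $\Sigma$ to contain the polar set of $h_0$), and it already provides $R_j\in\mathcal A_1(L^{p_j})$, $p_j\nearrow\infty$, with $R_j\to c_1(L,h)$ weakly. To produce a single zero divisor I would invoke the mechanism behind this: let $H^0_{(2)}(X,L^p)$ be the Bergman space of $L^p$-sections that are $L^2$ with respect to $h^p$ and a fixed smooth volume form, $P_p$ its Bergman kernel function, $(S^p_j)$ an orthonormal basis, and $\gamma_p$ the associated Fubini--Study current, equal in a local frame $e_\alpha$ of $L$ (writing $S^p_j=s^p_j\,e_\alpha^{\otimes p}$) to $\tfrac12\,dd^c\log\sum_j|s^p_j|^2$. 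The weak Bergman kernel asymptotics $\tfrac1p\log P_p\to0$ in $L^1_{loc}(X)$ of \cite[Theorems 1.1, 1.2]{CM11} give $\tfrac1p\gamma_p\to c_1(L,h)$ weakly. For $\sigma=\sum_j a_j S^p_j$ the Lelong--Poincar\'e formula reads
\begin{equation*}
\frac1p\,[\sigma=0]=\frac1p\,\gamma_p+\frac1{2p}\,dd^c\log\frac{|\sigma|^2_{h^p}}{P_p}\,,
\end{equation*}
and the average over the unit sphere of $H^0_{(2)}(X,L^p)$ of $\bigl\|\log(|\sigma|^2_{h^p}/P_p)\bigr\|_{L^1(X)}$ is $O\bigl(\log\dim H^0_{(2)}(X,L^p)\bigr)=O(\log p)$, by the elementary estimate for $\int\bigl|\log|\langle a,u\rangle|\bigr|\,d\sigma(a)$ over a unit sphere. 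A Chebyshev/Borel--Cantelli argument---this is precisely the equidistribution of zeros of random sections \cite{ShZ99,ShZ08,DS06b} in the singular-metric form of \cite{CM11}---then yields $\sigma_{p_j}\in H^0_{(2)}(X,L^{p_j})$ with $\tfrac1{p_j}[\sigma_{p_j}=0]-\tfrac1{p_j}\gamma_{p_j}\to0$, hence $\tfrac1{p_j}[\sigma_{p_j}=0]\to c_1(L,h)$ weakly. Combined with the reduction, this proves the theorem.

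The step I expect to be the main obstacle is the weak Bergman kernel estimate $\tfrac1p\log P_p\to0$ in $L^1_{loc}$: its lower bound requires producing, near almost every point $x\in X$, a holomorphic section of $L^p$ of unit pointwise norm at $x$ and $L^2$-norm at most $e^{o(p)}$, via the Ohsawa--Takegoshi extension theorem, and this is exactly where the strict positivity $c_1(L,h)\ge c\,\Omega$---hence the twist by $h_0$ in the first step---is indispensable (the upper bound is a sub-mean-value inequality). The other delicate point is upgrading the identity $\mathbb E\bigl[\tfrac1p[\sigma=0]\bigr]=\tfrac1p\gamma_p$ to convergence for a single sequence of sections, which needs a variance bound that stays effective although $\log(|\sigma|^2_{h^p}/P_p)$ is only $L^1$, not bounded, near $\Sigma$ and near the poles of $h$. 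Both inputs come from \cite{CM11} and the cited equidistribution literature; the genuinely new ingredient of Theorem~\ref{T:approx3} over Theorem~\ref{T:approx} with $k=1$ is the regularization-and-twist reduction, which is what removes every regularity assumption on $h$ in the big case.
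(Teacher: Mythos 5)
Your proposal is correct, and its skeleton coincides with the paper's: both proofs exploit bigness to produce a strictly positively curved singular metric $h_0$ \cite{JS93}, pass to the twisted metrics obtained by taking convex combinations of the weight of $h$ with that of $h_0$ (your $\wi h_m=h_m^{1-t_m}\otimes h_0^{t_m}$ versus the paper's $\varphi_j=(j\varphi+\varphi_0)/(j+1)$), and finish with the same diagonal argument over a countable dense family of test forms, using that all the currents lie in $c_1(L)$ and hence have equal mass. The difference is in how the strictly positive case is settled. The paper simply cites \cite[Theorem 5.1]{CM11}, which already gives a single sequence of sections with $\frac1p[\sigma_p=0]\to c_1(L,h)$ for \emph{any} singular metric with strictly positive curvature on a big line bundle, with no regularity hypothesis; consequently no regularization of $h$ is needed at all, and the approximating metrics $h_j$ are just the convex combinations themselves. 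You instead insert Demailly's regularization \cite{D92} to give the approximants analytic singularities, precisely so that the Bergman kernel asymptotics $\frac1p\log P_p\to0$ of \cite[Theorems 1.1, 1.2]{CM11} (which require continuity outside an analytic set) apply, and you then re-derive the single-section statement by the sphere-average estimate for $\log\bigl(|\sigma|^2_{h^p}/P_p\bigr)$ and Lelong--Poincar\'e; this is exactly the mechanism inside the cited \cite[Theorem 5.1]{CM11} and of \cite{ShZ99,DS06b}, and your version of it is sound (for mere existence of $\sigma_p$ the expectation bound $O(\log d_p)$ suffices, no Borel--Cantelli is needed). So your route is more self-contained on the equidistribution side but carries an extra, strictly speaking unnecessary, regularization step; the paper's route is shorter because the quoted result absorbs both the Bergman kernel lower bound and the passage from the Fubini--Study current to a single divisor, and because in bidegree $(1,1)$ weak convergence of curvatures under $L^1$ convergence of weights makes any smoothing of $h$ superfluous. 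Your closing remark should accordingly be tempered: the new ingredient over the $k=1$ case of Theorem \ref{T:approx} is the twist by $h_0$ plus the use of \cite[Theorem 5.1]{CM11}, not the regularization.
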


\par Theorems \ref{T:approx}, \ref{T:approx2} and \ref{T:approx3} are proved in Section \ref{S:pfat}. In Section \ref{S:rmad} we consider the general framework as in \cite{CM11} of a holomorphic line bundle $L$ over a complex manifold $X$ endowed with a positively curved singular Hermitian metric $h$ which is continuous outside a compact analytic set $\Sigma\subset X$, and we work instead with the spaces of $L^2$-holomorphic sections of the adjoint bundles $L^p\otimes K_X$ relative to the metrics induced by $h$ and some positive $(1,1)$ form on $X$. If $\gamma_p$ are the corresponding Fubini-Study currents and $k\leq\codim\Sigma$ we prove that $\frac{1}{p^k}\,\gamma_p^k$ converge weakly on $X$ to $c_1(L,h)^k$. We also study the equidistribution of common zeros of $k$-tuples of $L^2$-holomorphic sections of $L^p\otimes K_X$. We conclude Section \ref{S:rmad} by noting that working with sections of adjoint bundles one can obtain a more general approximation theorem than Theorem \ref{T:approx}, in the sense that the metric $h_0$ is assumed to verify a weaker positivity condition. Let $\mathcal A_k(L^p\otimes K_X)$ be the spaces of positive closed currents of bidegree $(k,k)$ on $X$ defined as in \eqref{e:aci} using sections $\sigma_{\ell,j}\in H^0(X,L^p\otimes K_X)$. Namely, we prove the following:

\begin{Theorem}\label{T:approx4} Let $(X,\Omega)$ be a compact K\"ahler manifold and $(L,h_0)$ be a holomorphic line bundle on $X$ endowed with a singular Hermitian metric $h_0$ with positive curvature current $c_1(L,h_0)\geq0$. Assume that there exists an analytic subset $\Sigma$ of $X$ such that $h_0$ is continuous on $X\setminus\Sigma$ and $c_1(L,h_0)$ is strictly positive on $X\setminus\Sigma$, i.e. $c_1(L,h_0)\geq\varepsilon\,\Omega$ for some continuous function $\varepsilon:X\setminus\Sigma\longrightarrow(0,+\infty)$. If $k\leq\codim\Sigma$ and $h$ is a singular Hermitian metric on $L$ with curvature current $c_1(L,h)\geq0$ on $X$ and with locally bounded weights on $X\setminus\Sigma$, then there exists a sequence of currents $R_j\in\mathcal A_k(L^{p_j}\otimes K_X)$, where $p_j\nearrow\infty$, such that $R_j$ converges weakly on $X$ to $c_1(L,h)^k$.
\end{Theorem}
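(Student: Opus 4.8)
The plan is to follow the pattern of the proof of Theorem~\ref{T:approx}, with the Fubini--Study currents of the powers $L^p$ replaced by the Fubini--Study currents $\gamma_p$ of the $L^2$-spaces of holomorphic sections of the adjoint bundles $L^p\otimes K_X$ considered in this section, built using both $h$ and $h_0$ (together with $\Omega$), for which the convergence $p^{-k}\gamma_p^k\to c_1(L,h)^k$ and the equidistribution of common zeros of $k$-tuples of sections are established earlier in this section. Note that $c_1(L,h_0)\ge\varepsilon\Omega$ on $X\setminus\Sigma$ already forces $L$ to be big, hence $X$ projective as in Theorem~\ref{T:approx}, so these section spaces have the expected growth of dimension, and for $p$ large the common zero set of a generic $k$-tuple of $L^2$-holomorphic sections of $L^p\otimes K_X$ has pure codimension $k$, so that $\mathcal A_k(L^p\otimes K_X)\neq\emptyset$.

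First I would fix a countable family $\{\phi_i\}_{i\ge1}$ of smooth test $(n-k,n-k)$-forms on $X$ which is dense in the $C^0$-topology, so that, together with a uniform bound on masses, convergence of the pairings $\langle\,\cdot\,,\phi_i\rangle$ for every $i$ forces weak convergence of currents of bidegree $(k,k)$. Choose $p_j\nearrow\infty$; the convergence theorem of this section gives $p_j^{-k}\gamma_{p_j}^k\to c_1(L,h)^k$ weakly on $X$. Next, for fixed $j$, the expectation of $[\sigma_1=0]\wedge\cdots\wedge[\sigma_k=0]$ over $k$-tuples $(\sigma_1,\dots,\sigma_k)$ of $L^2$-holomorphic sections of $L^{p_j}\otimes K_X$ chosen independently with respect to the natural (Fubini--Study) probability measure on the unit spheres equals $\gamma_{p_j}^k$, and for a.e.\ such tuple the common zero set has pure codimension $k$, so that the wedge product is a well-defined positive closed current in the sense of \cite{D93,FS95}. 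Averaging over $N_j$ independent $k$-tuples and applying the law of large numbers (or a Chebyshev estimate on the variance of the pairings against the $\phi_i$), for $N_j$ large there is a choice of tuples $(\sigma_{\ell,1},\dots,\sigma_{\ell,k})$, $\ell=1,\dots,N_j$, such that
\[
R_j:=\frac{1}{p_j^{k}N_j}\sum_{\ell=1}^{N_j}[\sigma_{\ell,1}=0]\wedge\cdots\wedge[\sigma_{\ell,k}=0]\in\mathcal A_k(L^{p_j}\otimes K_X)\quad\text{and}\quad\bigl|\langle R_j-p_j^{-k}\gamma_{p_j}^k,\phi_i\rangle\bigr|<\tfrac1j,\ \ 1\le i\le j.
\]
Since every member of $\mathcal A_k(L^{p_j}\otimes K_X)$ and also $p_j^{-k}\gamma_{p_j}^k$ lie in the cohomology class $p_j^{-k}c_1(L^{p_j}\otimes K_X)^k$, their masses are uniformly bounded in $j$; hence $R_j-p_j^{-k}\gamma_{p_j}^k\to0$ weakly on $X$, and together with the convergence of $p_j^{-k}\gamma_{p_j}^k$ this yields $R_j\to c_1(L,h)^k$ weakly on $X$.

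The substantial point is the convergence $p^{-k}\gamma_p^k\to c_1(L,h)^k$ invoked above under the weakened positivity hypothesis on $h_0$. Because we work with the adjoint bundles $L^p\otimes K_X$, the Ohsawa--Takegoshi-type $L^2$-extension estimates require only the strict positivity $c_1(L,h_0)\ge\varepsilon\Omega$ on $X\setminus\Sigma$ --- even with $\varepsilon$ degenerating towards $\Sigma$ --- in order to produce sufficiently many peak sections; this yields the weak asymptotics $\frac1p\log P_p\to0$ in $L^1_{loc}(X\setminus\Sigma)$ for the Bergman kernel of $L^p\otimes K_X$, while the hypothesis $k\le\codim\Sigma$ controls the mass of $\gamma_p^k$ near $\Sigma$ so that the trivial extensions across $\Sigma$ pass to the limit. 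Granting this (it is the content of the convergence result of this section), the remaining steps above are the same soft weak-compactness and equidistribution arguments as in the proof of Theorem~\ref{T:approx}.
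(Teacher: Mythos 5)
There is a genuine gap: you never bridge the hypotheses actually made on $h$ and $h_0$ with the hypotheses of the convergence results of Section \ref{S:rmad}. Your whole argument rests on the claim that ``the convergence theorem of this section gives $p^{-k}\gamma_p^k\to c_1(L,h)^k$'', but Theorems \ref{T:mt} and \ref{T:mt2} apply to the Fubini--Study currents of the spaces $H^0_{(2)}(X,L^p\otimes K_X)$ defined by a metric satisfying (B$'$): \emph{continuous} on $X\setminus\Sigma$ and, for Theorem \ref{T:mt2}, with curvature \emph{strictly} positive on $X\setminus\Sigma$. The metric $h$ in Theorem \ref{T:approx4} has only locally bounded weights on $X\setminus\Sigma$ and curvature merely $\geq 0$, with no strict positivity anywhere; the continuity and strict positivity are hypotheses on the auxiliary metric $h_0$, which in your proposal is used only to note that $L$ is big. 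In particular the assertion that Ohsawa--Takegoshi-type extension yields $\frac1p\log P_p\to 0$ for the Bergman kernel of the spaces defined by $h$ is unjustified (and false in general for a merely semipositive singular $h$); the phrase that $\gamma_p$ is ``built using both $h$ and $h_0$'' is never made precise, and if a fixed combination of the two metrics were used, the limit of $p^{-k}\gamma_p^k$ would be the $k$-th power of the curvature of that combination, not $c_1(L,h)^k$.

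What is missing is exactly Step 3 of the proof of Theorem \ref{T:approx}, which is where $h_0$ and the hypothesis $k\leq\codim\Sigma$ really enter: write $c_1(L,h)=\theta+dd^c\varphi$, regularize $\varphi$ by a decreasing sequence $\varphi_j$ of qpsh functions, smooth on $X\setminus\Sigma$, with curvature loss $\varepsilon_j\Omega\to 0$ (Demailly--P\u{a}un), and take convex combinations with $\varphi_0$ to produce metrics $h_j$, continuous on $X\setminus\Sigma$, with $c_1(L,h_j)\geq 0$ on $X$ and strictly positive off $\Sigma$, to which Theorem \ref{T:mt2} (and hence Theorems \ref{T:SZ}, \ref{T:SZ1}, together with the completeness of a K\"ahler metric on $X\setminus\Sigma$, cf.\ Ohsawa) can be applied; then one must prove $c_1(L,h_j)^k\to c_1(L,h)^k$, which is not a soft step but uses \cite[Proposition 2.9, Corollary 2.11]{D93} and the codimension hypothesis on $\Sigma$ to control the wedge products near $\Sigma$; finally one extracts the sequence $R_j\in\mathcal A_k(L^{p_j}\otimes K_X)$ by a diagonal argument combining this with your Step-2-type averaging. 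Your averaging, mass-bound and law-of-large-numbers considerations (the analogue of Lemma \ref{L:discrete}) are fine, but they reproduce only Steps 1--2 of the paper's scheme; without the regularization step the convergence $p^{-k}\gamma_p^k\to c_1(L,h)^k$ that drives the whole proof is unsupported.
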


\section{Proofs of Theorems \ref{T:approx}, \ref{T:approx2} and \ref{T:approx3}}\label{S:pfat}

\par We recall first the inductive definition of complex Monge-Amp\`ere operators due to Bedford and Taylor. Let $T$ be a positive closed current of bidimension $(\ell,\ell)$, $\ell>0$, on an open set $U$ in ${\mathbb C}^n$ and let $|T|$ denote its trace measure. If $u$ is a psh function on $U$ so that $u\in L^1_{loc}(U,|T|)$ we say that the wedge product $dd^cu\wedge T$ is well defined. This is the positive closed current of bidimension $(\ell-1,\ell-1)$ defined by $dd^cu\wedge T=dd^c(uT)$.

\par If $u_1,\dots,u_k$ are psh functions on $U$ we say that $dd^cu_1\wedge\ldots\wedge dd^cu_k$ is well defined if one can define inductively as above all intermediate currents
$$dd^cu_{j_1}\wedge\ldots\wedge dd^cu_{j_\ell}=dd^c(u_{j_1}dd^cu_{j_2}\wedge\ldots\wedge dd^cu_{j_\ell}),\;1\leq j_1<\ldots<j_\ell\leq k.$$
The wedge product is well defined for locally bounded psh functions \cite{BT76,BT82}, for psh functions that are locally bounded outside a compact subset of a pseudoconvex open set $U$, or when the mutual intersection of their unbounded loci is small in a certain sense \cite{Si85,D93, FS95}. If $T_j$, $1\leq j\leq k$, are positive closed currents of bidegree $(1,1)$ on a complex manifold then one defines  $T_1\wedge\ldots\wedge T_k$ by working with their local psh potentials.

\medskip

\par We will need the following lemma:

\begin{Lemma}[{\cite[Lemma 2.1, p.\,182]{Kuipers}}]\label{L:discrete} Let $\mu$ be a positive Radon measure on a compact metric space $K$ with $\mu(K)=1$, let $\mathcal{B}$ be the $\sigma$-algebra of Borel sets in $K$, and let $f:K\longrightarrow{\mathbb C}$ be a given bounded Borel measurable function. Then
$$\lim_{N\to\infty}\frac{1}{N}\,\sum_{\ell=1}^Nf(x_\ell)=\int_Kf\,d\mu$$
for almost all sequences $(x_\ell)\in\prod_{\ell=1}^\infty(K,\mathcal{B},\mu)$.
\end{Lemma}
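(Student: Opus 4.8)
The plan is to recognize this statement as nothing other than the strong law of large numbers and to prove it as such. Equip $\Omega:=\prod_{\ell=1}^\infty K$ with the product $\sigma$-algebra and the product probability measure $\mathbb P:=\mu^{\otimes\infty}$; then the coordinate projections $x_\ell\colon\Omega\to K$ form a sequence of independent, identically distributed random variables, each with law $\mu$. Since $f$ is bounded and Borel measurable, the random variables $Y_\ell:=f\circ x_\ell$ are i.i.d., bounded (hence square-integrable), with common mean $m:=\int_K f\,d\mu$. Treating real and imaginary parts separately, it suffices to prove $\frac1N\sum_{\ell=1}^N Y_\ell\to m$ $\mathbb P$-almost surely for real-valued $Y_\ell$; replacing $Y_\ell$ by $Y_\ell-m$ we may assume $\mathbb E[Y_\ell]=0$ and $|Y_\ell|\le C$, and we must show $S_N/N\to0$ a.s., where $S_N:=\sum_{\ell=1}^N Y_\ell$.

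I would then run the classical $L^2$ argument with a subsequence of perfect squares. Independence and vanishing mean give $\mathbb E[S_N^2]=N\,\mathrm{Var}(Y_1)\le C^2N$, so by Chebyshev's inequality $\mathbb P\big(|S_{N^2}|\ge\varepsilon N^2\big)\le C^2/(\varepsilon^2N^2)$ for every $\varepsilon>0$, and the right-hand side is summable in $N$; the Borel--Cantelli lemma yields $S_{N^2}/N^2\to0$ a.s. For an arbitrary index $M$, pick $N=N(M)$ with $N^2\le M<(N+1)^2$; then $|S_M-S_{N^2}|\le C\big((N+1)^2-N^2\big)=C(2N+1)$, whence $|S_M/M-S_{N^2}/N^2|\le |S_M-S_{N^2}|/M+|S_{N^2}|\,(1/N^2-1/M)\le C(2N+1)/N^2+|S_{N^2}|/N^2\to0$ a.s. Therefore $S_M/M\to0$ a.s., which, after undoing the reductions, is exactly the assertion that $\frac1N\sum_{\ell=1}^N f(x_\ell)\to\int_K f\,d\mu$ for $\mathbb P$-almost every sequence $(x_\ell)$.

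The main point requiring care is the measure-theoretic bookkeeping rather than any analytic difficulty: one must be precise that ``almost all sequences'' refers to the product measure on $\prod_{\ell=1}^\infty(K,\mathcal B,\mu)$ and that this is what makes the coordinates genuinely independent, and one must handle the interpolation between consecutive perfect squares. There is no serious obstacle. I would emphasize that the argument uses only that $f$ is \emph{bounded} and Borel measurable, not continuous, so that no Weyl-type equidistribution input is needed; boundedness is precisely what supplies the finite variance and hence the cheap Chebyshev estimate. Alternatively, one could simply invoke Kolmogorov's strong law of large numbers, which even covers merely $L^1$ integrands, but the self-contained version above is entirely sufficient for the present application.
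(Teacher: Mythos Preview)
Your argument is correct: this lemma is precisely the strong law of large numbers for bounded i.i.d.\ random variables, and your $L^2$ proof via the subsequence $\{N^2\}$ together with Chebyshev and Borel--Cantelli is the standard self-contained route. The interpolation step between consecutive squares is handled correctly.

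Regarding comparison with the paper: the paper does \emph{not} supply its own proof of this lemma but simply cites it from Kuipers--Niederreiter. So there is nothing to compare at the level of argument; your proposal provides a complete proof where the paper has none. It is worth noting that the reference in Kuipers--Niederreiter likewise identifies the statement with the strong law of large numbers, so your approach is exactly the intended one.
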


\begin{proof}[Proof of Theorem \ref{T:approx}] Since $\dim\Sigma\leq n-k$ and $h$ has locally bounded weights on $X\setminus\Sigma$, $c_1(L,h)^k$ is a well defined positive closed current on $X$ by \cite[Corollary 2.11]{D93}. The proof of Theorem \ref{T:approx} is divided in three steps.

\smallskip

\par {\em Step 1.} We introduce here the spaces of $L^2$-holomorphic section of $L^p$ and their Fubini-Study currents that will be used in the proof. Assume for the moment that $h$ is any singular metric on $L$ with positive curvature current $c_1(L,h)\geq0$. Let $H^0_{(2)}(X,L^p)$ be the space of $L^2$-holomorphic sections of $L^p$ relative to the metric $h_p$ induced by $h$ and the volume form  $\Omega^n$ on $X$,
$$H^0_{(2)}(X,L^p)=\Big\{S\in H^0(X,L^p):\,\|S\|_p^2:=\int_X|S|_{h_p}^2\,\Omega^n<\infty\Big\}\,,$$
endowed with the obvious inner product. Let $d_p=\dim H^0_{(2)}(X,L^p)$ and fix an orthonormal basis $\{S^p_i\}_{1\leq i\leq d_p}$ of $H^0_{(2)}(X,L^p)$. We denote by $\gamma_p$ the Fubini-Study current of the space $H^0_{(2)}(X,L^p)$, defined by
\begin{equation}\label{e:gammap}
\gamma_p\mid_{_{U_\alpha}}=\frac{1}{2}\,dd^c\log\left(\sum_{i=1}^{d_p}|s_i^p|^2\right),\;U_\alpha\subset X\,\text{ open}\,,
\end{equation}
where $d^c=\frac{1}{2\pi i}(\partial-\overline\partial)$, $S_i^p=s_i^pe_\alpha^{\otimes p}$, and $e_\alpha$ is a local holomorphic frame for $L$ on $U_\alpha$. Recall that the currents $\gamma_p$ are independent of the choice of basis $\{S_i^p\}$ and we have
$$\frac{1}{p}\,\gamma_p=c_1(L,h)+\frac{1}{2p}\,dd^c\log P_p\,,$$
where $P_p=\sum_{i=1}^{d_p}|S_i^p|_{h_p}^2$ is the Bergman kernel function of the space $H^0_{(2)}(X,L^p)$ (cf.\ \cite[Lemma 3.2]{CM11} or \cite[Lemma 3.8]{CM12}). If $S\in H^0(X,L^p)$ the Lelong-Poincar\'e equation \cite[Theorem\,2.3.3]{MM07} takes the form
$$\frac{1}{p}\,[S=0]=c_1(L,h)+\frac{1}{2p}\,dd^c\log |S|_{h_p}^2\,.$$
Let $\theta$ be a closed smooth (1,1) form representing the first Chern class of $L$. Then any positive closed current $T$ of bidegree (1,1) in the first Chern class of $L$ can be written as $T=\theta+dd^c\varphi$ where $\varphi$ is a $\theta$-psh function on $X$. These are quasiplurisubharmonic (qpsh) functions $\varphi$ on $X$, i.e. functions that can be written locally as a sum of a psh function and a smooth one, such that the current $\theta+dd^c\varphi\geq0$. It follows by the inductive (local) definition of the complex Monge-Amp\`ere operator that if, for some $k\leq n$, the wedge product currents $c_1(L,h)^k$, $\gamma_p^k$, $[\sigma_1=0]\wedge\ldots\wedge[\sigma_k=0]$ are well defined, where $\sigma_i\in H^0(X,L^p)$, then
\begin{equation}\label{e:mass}
\frac{1}{p^k}\int_X\gamma_p^k\wedge\Omega^{n-k}=\frac{1}{p^k}\int_X[\sigma_1=0]\wedge\ldots\wedge[\sigma_k=0]\wedge\Omega^{n-k}=\int_Xc_1(L,h)^k\wedge\Omega^{n-k}\,,
\end{equation}
all being equal to $\int_X\theta^k\wedge\Omega^{n-k}$ (cf.\ \cite{GZ05}).

\par We identify the unit sphere ${\mathcal S}^p$ of $(H^0_{(2)}(X,L^p),\|\cdot\|_{L^2})$ to the unit sphere ${\mathbf S}^{2d_p-1}$ in ${\mathbb C}^{d_p}$ by
$${\mathbf S}^{2d_p-1}\ni a=(a_1,\dots,a_{d_p})\longmapsto S_a=\sum_{i=1}^{d_p}a_iS^p_i\in{\mathcal S}^p,$$
and we let $\lambda_p$ be the probability measure on ${\mathcal S}^p$ induced by the normalized surface measure on ${\mathbf S}^{2d_p-1}$, denoted also by $\lambda_p$ (i.e.\ $\lambda_p({\mathbf S}^{2d_p-1})=1$). If $k\geq2$, we let $\lambda_p^k$ denote the product measure on $({\mathcal S}^p)^k$ determined by $\lambda_p$.

\smallskip

\par {\em Step 2.} We consider now the case when the metric $h$ has strictly positive curvature current and is continuous on $X\setminus\Sigma$. By \cite[Theorem 5.4]{CM11} (see also \cite[Theorems 1.1, 1.2]{CM11}), there exists $p_0$ so that the currents $c_1(L,h)^k$ and $\gamma_p^k$, for $p\geq p_0$, are well defined and $\frac{1}{p^k}\,\gamma_p^k\to c_1(L,h)^k$ weakly on $X$ as $p\to\infty$. Moreover, for each $p\geq p_0$ there exists a Borel set $\mathcal N_p\subset({\mathcal S}^p)^k$ with $\lambda_p^k(\mathcal N_p)=0$ such that
$$[\sigma=0]:=[\sigma_1=0]\wedge\ldots\wedge[\sigma_k=0]\,,\,\text{ where }\sigma=(\sigma_1,\dots,\sigma_k)\in({\mathcal S}^p)^k\setminus\mathcal N_p\,,$$
is a well defined positive closed current of bidegree $(k,k)$ on $X$ and
$$\int_{({\mathcal S}^p)^k}\big\langle[\sigma=0],\theta\big\rangle\,d\lambda_p^k=\langle\gamma_p^k,\theta\rangle\,,$$
for every test form $\theta$ on $X$.

\par Let $\{\theta_m\}_{m\geq1}$ be a dense set of smooth $(n-k,n-k)$ forms on $X$ and let $p\geq p_0$. We define the function
$$f_m:({\mathcal S}^p)^k\to\mathbb{C},\:\:f_m(\sigma)=\left\{\begin{array}{ll}\langle[\sigma=0],\theta_m\rangle\,,\,\text{ if }\sigma\in({\mathcal S}^p)^k\setminus\mathcal N_p\,,\\
0\,,\,\text{ if }\sigma\in\mathcal N_p\,.\end{array}\right.$$
Then $f_m$ is Borel measurable and bounded in view of \eqref{e:mass}. Since $\lambda_p^k(\mathcal N_p)=0$, Lemma \ref{L:discrete}, applied to the finite collection of functions $f_m$, $1\leq m\leq p$, implies that for almost all sequences $(\sigma_\ell)\in\prod_{\ell=1}^\infty(({\mathcal S}^p)^k\setminus\mathcal N_p,\lambda_p^k)$ we have
$$\frac{1}{N}\,\sum_{\ell=1}^Nf_m(\sigma_\ell)=\frac{1}{N}\,\sum_{\ell=1}^N\langle[\sigma_\ell=0],\theta_m\rangle\to\int_{({\mathcal S}^p)^k}f_m\,d\lambda_p^k=\langle\gamma_p^k,\theta_m\rangle\,,$$
as $N\to\infty$, for all $m$, $1\leq m\leq p$.

\par It folows that for each $p\geq p_0$ there exists a current $T_p=\frac{1}{N_p}\,\sum_{\ell=1}^{N_p}[\sigma^p_\ell=0]$, where $N_p\in\mathbb N$, $\sigma^p_\ell\in({\mathcal S}^p)^k\setminus\mathcal N_p$, such that
\begin{equation}\label{e:Rp}
\left|\langle T_p,\theta_m\rangle-\langle\gamma_p^k,\theta_m\rangle\right|<1\;,\;\;1\leq m\leq p\,.
\end{equation}
By \eqref{e:mass} the currents $\frac{1}{p^k}\,\gamma_p^k$ and $R_p:=\frac{1}{p^k}\,T_p\in\mathcal A_k(L^p)$ have uniformly bounded mass. Since $\frac{1}{p^k}\,\gamma_p^k\to c_1(L,h)^k$, we conclude by \eqref{e:Rp} that $R_p\to c_1(L,h)^k$ weakly on $X$ as $p\to\infty$.

\smallskip

\par {\em Step 3.} We treat here the general case when $h$ has locally bounded weights on $X\setminus\Sigma$ and $c_1(L,h)\geq0$. Fix a smooth Hermitian metric $h_s$ on $L$ and let $\theta=c_1(L,h_s)$. By \cite{D90} (see also \cite{GZ05}) the set of positively curved singular metrics on $L$ is in one-to-one correspondence with the set of $\theta$-psh functions on $X$. Let $\varphi$, resp. $\varphi_0$, be the $\theta$-psh function determined by $h$, resp. $h_0$.

\par By hypothesis, $\varphi$ is locally bounded on $X\setminus\Sigma$, $\varphi_0$ is continuous on $X\setminus\Sigma$, and
$$c_1(L,h)=\theta+dd^c\varphi\geq0\,,\;\,c_1(L,h_0)=\theta+dd^c\varphi_0\geq c\,\Omega\,.$$
Subtracting a constant we may assume $\varphi<0$ on $X$. By \cite[Theorem 3.2]{DP04} (see also \cite[Proposition 3.7]{D92}) there exist a sequence of qpsh functions $\varphi_j<0$ decreasing to $\varphi$ on $X$ and a sequence $\varepsilon_j\in(0,c)$, $\varepsilon_j\searrow0$, such that $\varphi_j$ are smooth on $X\setminus\Sigma$ and $\theta+dd^c\varphi_j\geq-\varepsilon_j\Omega$ on $X$. If $\widetilde\varphi_j=(1-\varepsilon_j/c)\varphi_j+(\varepsilon_j/c)\varphi_0$ then
$$\theta+dd^c\widetilde\varphi_j\geq\theta-(1-\varepsilon_j/c)(\theta+\varepsilon_j\Omega)+(\varepsilon_j/c)(-\theta+c\,\Omega)=(\varepsilon_j^2/c)\,\Omega\,,$$
hence $\widetilde\varphi_j$ are $\theta$-psh. Let $h_j$ denote the metric on $L$ determined by $\widetilde\varphi_j$, so $c_1(L,h_j)\geq(\varepsilon_j^2/c)\,\Omega$ on $X$ and $h_j$ is continuous on $X\setminus\Sigma$.

\par We claim that $c_1(L,h_j)^k\to c_1(L,h)^k$ weakly on $X$ as $j\to\infty$. Indeed, let $U\subset X$ be a coordinate ball and write $\theta=dd^c\chi$, $\Omega=dd^c\rho$, where $\chi,\rho$ are smooth functions so that $\chi<0$, $\rho>0$ on $U$. The functions
$$\psi_j=(1-\varepsilon_j/c)(\chi+\varphi_j)+\varepsilon_j\rho\,,\;\,\psi_0=\chi+\varphi_0-c\rho\,,$$
are continuous on $U\setminus\Sigma$ and psh on $U$, since $dd^c\psi_j\geq(\varepsilon_j^2/c)\,\Omega$, $dd^c\psi_0\geq0$. As $\chi<0$, $\rho>0$, $0>\varphi_j\searrow\varphi$, $\varepsilon_j\searrow0$, it follows that $\psi_j\searrow\chi+\varphi$ on $U$. Note that on $U$
$$c_1(L,h_j)=\theta+dd^c\widetilde\varphi_j=dd^c\widetilde\psi_j\,\,\text{ where } \widetilde\psi_j=\psi_j+(\varepsilon_j/c)\psi_0\,.$$
Since $k\leq\codim\Sigma$, it follows by \cite[Corollary 2.11]{D93} that $(dd^c\psi_j)^{k-\ell}\wedge(dd^c\psi_0)^\ell$, $(dd^c(\chi+\varphi))^{k-\ell}\wedge(dd^c\psi_0)^\ell$, $0\leq\ell\leq k$, are well defined positive closed currents on $U$. Moreover, since $\varepsilon_j\to0$ and by \cite[Proposition 2.9]{D93}, $(dd^c\psi_j)^{k-\ell}\wedge(dd^c\psi_0)^\ell\to(dd^c(\chi+\varphi))^{k-\ell}\wedge(dd^c\psi_0)^\ell$, we have
$$c_1(L,h_j)^k=(dd^c\widetilde\psi_j)^k=\sum_{\ell=0}^k\binom{k}{\ell}\frac{\varepsilon_j^\ell}{c^\ell}\,(dd^c\psi_j)^{k-\ell}\wedge(dd^c\psi_0)^\ell\to c_1(L,h)^k\,,$$
weakly on $U$ as $j\to\infty$.

\par Let $\{\theta_m\}_{m\geq1}$ be a dense set of smooth $(n-k,n-k)$ forms on $X$. Applying the result of Step 2 to each metric $h_j$ we obtain a sequence of integers $p_j\nearrow\infty$ and of currents $R_j\in\mathcal A_k(L^{p_j})$ such that
$$\left|\big\langle R_j,\theta_m\big\rangle-\big\langle c_1(L,h_j)^k,\theta_m\big\rangle\right|<1/j\;,\;\,1\leq m\leq j\,.$$
Since $R_j$, $c_1(L,h_j)^k$, have uniformly bounded mass and $c_1(L,h_j)^k\to c_1(L,h)^k$, it follows that $R_j\to c_1(L,h)^k$ weakly on $X$ as $j\to\infty$. The proof of Theorem \ref{T:approx} is complete.
\end{proof}

\begin{Remark}\label{R:Kahler} The hypothesis of Theorem \ref{T:approx} that $X$ carries a K\"ahler form $\Omega$ is essential. In Step 2 of the proof we apply \cite[Theorem 5.4]{CM11}, which requires $X$ to be a K\"ahler manifold. Moreover, we use that $\Omega$ is a K\"ahler form in formula \eqref{e:mass}, which shows that all currents $\frac{1}{p^k}\,\gamma_p^k$, $R_p\in\mathcal A_k(L^p)$, have equal mass with respect to $\Omega^{n-k}$.
\end{Remark}

\begin{proof}[Proof of Theorem \ref{T:approx2}] We note first that Theorem \ref{T:approx2} holds if $h$ is a smooth metric with positive curvature $c_1(L,h)>0$. Indeed, by \cite[Th\'eor\`eme 7.3]{DS06b} (see also \cite{ShZ08}) there exist sections $\sigma_{p,1},\dots,\sigma_{p,k}\in H^0(X,L^p)$ such that $[\sigma_{p,1}=0]\wedge\ldots\wedge[\sigma_{p,k}=0]$ are well defined positive closed currents of bidegree $(k,k)$ and $\frac{1}{p^k}\,[\sigma_{p,1}=0]\wedge\ldots\wedge[\sigma_{p,k}=0]\to c_1(L,h)^k$ weakly on $X$ as $p\to\infty$.

\par We consider next the general case of a singular metric $h$ with $c_1(L,h)\geq0$ and such that the current $c_1(L,h)^k$ is well defined for some $k\leq n$. Let $h_0$ be a smooth metric on $L$ such that $\Omega=c_1(L,h_0)$ is a K\"ahler form on $X$ and let $\varphi$ be the $\Omega$-psh function determined by $h$ (see \cite{D90,GZ05}). The regularization theorem \cite[Theorem 1]{BK07} (see also \cite{D92}, \cite[Theorem 8.1]{GZ05}) yields a decreasing sequence of smooth $\Omega$-psh functions $\varphi_j\searrow\varphi$ on $X$. Subtracting a constant, we may assume that $\varphi_1<0$ on $X$. Then $\psi_j:=\frac{j}{j+1}\,\varphi_j$ are smooth $\Omega$-psh functions on $X$ and $\psi_j\searrow\varphi$ as $j\to\infty$. If $h_j$ is the metric on $L$ defined by $\psi_j$ then $c_1(L,h_j)=\Omega+dd^c\psi_j\geq\frac{1}{j+1}\,\Omega>0$. Since the Monge-Amp\`ere operator is continuous on decreasing sequences it follows that $c_1(L,h_j)^k\to c_1(L,h)^k$ weakly on $X$.

\par We proceed now as in Step 3 from the proof of Theorem \ref{T:approx}. Let $\{\theta_m\}_{m\geq1}$ be a dense set of smooth $(n-k,n-k)$ forms on $X$. As noted at the beginning of the proof, we can apply the conclusion of Theorem \ref{T:approx2} for each smooth positively curved metric $h_j$. Thus we obtain a sequence of integers $p_j\nearrow\infty$ and sections $\sigma_{j,1},\dots,\sigma_{j,k}\in H^0(X,L^{p_j})$ such that $T_j:=[\sigma_{j,1}=0]\wedge\ldots\wedge[\sigma_{j,k}=0]$ are well defined positive closed currents of bidegree $(k,k)$ and
$$\left|\big\langle p_j^{-k}\,T_j,\theta_m\big\rangle-\langle c_1(L,h_j)^k,\theta_m\rangle\right|<1/j\;,\;\,1\leq m\leq j\,.$$
Since the currents $p_j^{-k}\,T_j$, $c_1(L,h_j)^k$, have uniformly bounded mass and $c_1(L,h_j)^k\to c_1(L,h)^k$, it follows that $p_j^{-k}\,T_j\to c_1(L,h)^k$ weakly on $X$ as $j\to\infty$. $\qed$

\medskip

\par\noindent{\em Proof of Theorem \ref{T:approx3}.} Since $L$ is big, it carries a singular Hermitian metric $h_0$ with strictly positive curvature current \cite{JS93} (see also \cite[Theorem 2.3.8]{MM07}). If $h$ is any such metric, \cite[Theorem 5.1]{CM11} shows that there exists a sequence of sections $\sigma_p\in H^0(X,L^p)$ such that $\frac{1}{p}\,[\sigma_p=0]\to c_1(L,h)$ weakly on $X$ as $p\to\infty$.

\par Assume now that $h$ is a singular Hermitian metric on $L$ with positive curvature current $c_1(L,h)\geq0$. Let $h_s$ be a fixed smooth metric on $L$, $\theta=c_1(L,h_s)$, and let $\varphi,\varphi_0$ be the $\theta$-psh functions determined by $h$, resp. $h_0$. If $h_j$ is the singular metric on $L$ determined by the $\theta$-psh function $\varphi_j=(j\varphi+\varphi_0)/(j+1)$ then its curvature is a K\"ahler current, since $c_1(L,h_j)\geq(j+1)^{-1}c_1(L,h_0)$. As $\varphi_j\to\varphi$ in $L^1(X)$, it follows that $c_1(L,h_j)\to c_1(L,h)$ weakly on $X$. Then we conclude the proof proceeding as at the end of the proof of Theorem \ref{T:approx2}, since Theorem \ref{T:approx3} holds for each metric $h_j$. 
\end{proof}

\begin{Example}
We conclude this section by discussing a fundamental example where Theorem \ref{T:approx2} applies. Consider the line bundle $L=\mathcal O(1)$ over $X={\mathbb P}^n$. The global holomorphic sections of $L^d$ are given by homogeneous polynomials of degree $d$ on ${\mathbb C}^{n+1}$. If $\omega$ denotes the Fubini-Study K\"ahler form on ${\mathbb P}^n$ then the set of singular metrics $h$ on $L$ is in one-to-one correspondence to the set of $\omega$-psh functions $\varphi$ on ${\mathbb P}^n$, and we have $c_1(L,h)=\omega+dd^c\varphi$ (\cite{D90}, \cite{GZ05}). Moreover, the latter class is in one-to-one correspondence to the Lelong class $\mathcal L({\mathbb C}^n)$ of entire psh functions with logarithmic growth, where we consider the standard embedding ${\mathbb C}^n\hookrightarrow{\mathbb P}^n$ (cf.\ \cite[Section 2]{GZ05}). Recall that a psh function $u$ on ${\mathbb C}^n$ is said to have logarithmic growth if $u(z)\leq\log^+\|z\|+C$ holds on ${\mathbb C}^n$, with some constant $C$ depending on $u$.

\par The domain $DMA({\mathbb P}^n,\omega)$ of the complex Monge-Amp\`ere operator is defined in \cite{CGZ08} to be the set of $\omega$-psh functions $\varphi$ on ${\mathbb P}^n$ for which there is a positive Radon measure $MA(\varphi)$ with the following property: If $\{\varphi_j\}$ is any sequence of {\em bounded} $\omega$-psh functions decreasing to $\varphi$ then $(\omega+dd^c {\varphi_j})^n\rightarrow MA(\varphi)$, in the weak sense of measures. We set $(\omega+dd^c\varphi)^n:=MA(\varphi)$. This domain includes all the classes in which the operator was defined earlier, either as a consequence of the local theory \cite{BT76,BT82,Ce04,Bl04,Bl06}, or genuinely in the compact setting (the class ${\mathcal E}$ from \cite{GZ07}).

\begin{Theorem}
For every $\varphi\in DMA({\mathbb P}^n,\omega)$ there exist a sequence of integers $d_j\nearrow\infty$ and $n$-tuples $(P_{j,1},\dots,P_{j,n})$ of homogeneous polynomials of degree $d_j$ on ${\mathbb C}^{n+1}$ such that for each $j$ the set $\{P_{j,1}=0\}\cap\ldots\cap\{P_{j,n}=0\}\subset{\mathbb P}^n$ is finite and the measures $d_j^{-n}\,[P_{j,1}=0]\wedge\ldots\wedge[P_{j,n}=0]$ converge weakly on ${\mathbb P}^n$ to $(\omega+dd^c\varphi)^n$.
\end{Theorem}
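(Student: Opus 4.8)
The plan is to deduce this theorem from Theorem~\ref{T:approx2} once we verify that the hypothesis ``$c_1(L,h)^k$ is well defined for some $k\le n$'' holds here with $k=n$. So the first step is to set up the dictionary: with $L=\mathcal O(1)$, a smooth positively curved metric $h_0$ whose curvature is the Fubini--Study form $\omega$, and a given $\varphi\in DMA(\mathbb P^n,\omega)$, let $h$ be the singular metric on $L$ corresponding to $\varphi$, so that $c_1(L,h)=\omega+dd^c\varphi\ge 0$. The content to be checked is that this current's top self-intersection is well defined in the Bedford--Taylor (local) sense of Section~\ref{S:pfat}, i.e.\ that $(\omega+dd^c\varphi)^n=MA(\varphi)$ in the local sense and not merely in the global decreasing-sequence sense of \cite{CGZ08}.

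The main step is therefore this reconciliation of the two notions of Monge--Amp\`ere operator. I would argue as follows. Fix a coordinate ball $U\subset\mathbb P^n$ and write $\omega=dd^c\chi$ with $\chi$ smooth on $U$, so that on $U$ the function $u:=\chi+\varphi$ is $\omega$-psh translated to an honest psh function, and $c_1(L,h)|_U=dd^cu$. We must show $u\in DMA$ in the local sense, i.e.\ that $(dd^cu)^n$ is well defined as a Bedford--Taylor current on $U$. The key observation is monotone stability: take the canonical decreasing regularization $\varphi_j=\max(\varphi,-j)$ (bounded $\omega$-psh functions decreasing to $\varphi$), or any such sequence guaranteed to exist by the regularization results \cite{D92,GZ05,BK07} cited for Theorem~\ref{T:approx2}. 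Each $u_j:=\chi+\varphi_j$ is bounded psh on $U$, so $(dd^cu_j)^n$ is classically well defined, and by the definition of $DMA(\mathbb P^n,\omega)$ the measures $(\omega+dd^c\varphi_j)^n$ converge weakly to $MA(\varphi)$, which locally means $(dd^cu_j)^n\rightharpoonup MA(\varphi)|_U$. One then invokes the local characterization of $DMA$ — that a psh function lies in the domain of the complex Monge--Amp\`ere operator precisely when such limits along bounded decreasing approximants exist and are independent of the choice of approximant — to conclude that $(dd^cu)^n$ is well defined locally and equals $MA(\varphi)|_U$. (The independence-of-sequence part is built into the hypothesis $\varphi\in DMA(\mathbb P^n,\omega)$, transported to $U$; one should note that convergence of the global regularizations forces convergence for every local bounded decreasing approximant, since two such can be interleaved.)

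Once $c_1(L,h)^n=(\omega+dd^c\varphi)^n=MA(\varphi)$ is established as a well-defined current in the sense required by Theorem~\ref{T:approx2}, I would apply Theorem~\ref{T:approx2} with $k=n$: $\mathbb P^n$ is a compact K\"ahler manifold and $L=\mathcal O(1)$ is positive, so the theorem produces integers $d_j\nearrow\infty$ and sections $\sigma_{j,1},\dots,\sigma_{j,n}\in H^0(\mathbb P^n,L^{d_j})$ with $T_j:=[\sigma_{j,1}=0]\wedge\dots\wedge[\sigma_{j,n}=0]$ a well-defined positive closed $(n,n)$-current and $d_j^{-n}T_j\to c_1(L,h)^n=(\omega+dd^c\varphi)^n$ weakly. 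It only remains to translate: $H^0(\mathbb P^n,\mathcal O(d_j))$ is canonically the space of homogeneous degree-$d_j$ polynomials on $\mathbb C^{n+1}$, so each $\sigma_{j,i}$ is such a polynomial $P_{j,i}$, the divisor $[\sigma_{j,i}=0]$ is the current of integration over $\{P_{j,i}=0\}\subset\mathbb P^n$; well-definedness of the wedge product $T_j$ forces (by \cite[Cor.\ 2.11, Prop.\ 2.12]{D93}, recalled in the introduction) that $\{P_{j,1}=0\}\cap\dots\cap\{P_{j,n}=0\}$ has pure dimension $0$, hence is finite, and $T_j$ equals integration with multiplicity over this finite set, i.e.\ a measure. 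This yields exactly the asserted conclusion.

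The expected main obstacle is the reconciliation in the second paragraph: ensuring that the global $DMA(\mathbb P^n,\omega)$ hypothesis really delivers the \emph{local} Bedford--Taylor well-definedness that Theorem~\ref{T:approx2} demands, including the choice-independence over arbitrary local bounded decreasing approximants (not just the globally-defined ones like $\max(\varphi,-j)$). If one wanted to sidestep this, an alternative is to re-run the regularization-plus-diagonalization argument of Step~3 in the proof of Theorem~\ref{T:approx} directly: approximate $\varphi$ by smooth $\omega$-psh $\varphi_j\searrow\varphi$, apply the smooth-metric case of Theorem~\ref{T:approx2} to each $h_j$, use that $(\omega+dd^c\varphi_j)^n\to MA(\varphi)$ by the very definition of $DMA$, and diagonalize against a dense sequence of test functions (the currents $d_j^{-n}T_j$ and $c_1(L,h_j)^n$ all having the same total mass $\int_{\mathbb P^n}\omega^n$, since $\mathbb P^n$ is K\"ahler). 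This route avoids any local/global compatibility subtlety and is the safer way to write the proof.
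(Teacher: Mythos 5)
Your principal route has a genuine flaw. You try to deduce from $\varphi\in DMA({\mathbb P}^n,\omega)$ that $(dd^c(\chi+\varphi))^n$ is well defined \emph{locally} in the Bedford--Taylor sense, but this implication is false in general: $DMA({\mathbb P}^n,\omega)$ is strictly larger than the set of $\omega$-psh functions lying in the local (Cegrell--B\l ocki) domain of the Monge--Amp\`ere operator. Indeed, as the paper recalls, $DMA$ contains the class $\mathcal E$ of \cite{GZ07}, which is defined ``genuinely in the compact setting'' and, for $n\ge 2$, contains functions that do not belong to the local domain; this is precisely the point of \cite{CGZ08}. The ``local characterization'' you invoke requires convergence of $(dd^cu_j)^n$ for \emph{every} local bounded decreasing psh approximant of $u=\chi+\varphi$, whereas membership in $DMA$ only guarantees convergence for decreasing sequences of \emph{globally defined bounded $\omega$-psh} functions. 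Your interleaving remark does not bridge this gap: a local bounded decreasing approximant on a coordinate ball need not be comparable to, or extendable to, a global bounded $\omega$-psh approximant, and since the desired inclusion $DMA\subset DMA_{loc}$ fails, no such reduction can work.

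Your fallback argument, on the other hand, is correct and is essentially the paper's proof. The paper deduces the statement by applying Theorem \ref{T:approx2} to the singular metric on $\mathcal O(1)$ determined by $\varphi$, and the proof of Theorem \ref{T:approx2} uses the well-definedness of $c_1(L,h)^n$ only through the continuity of the Monge--Amp\`ere operator along the bounded (smooth) decreasing regularizations $\psi_j\searrow\varphi$; for $\varphi\in DMA({\mathbb P}^n,\omega)$ and $k=n$ this continuity is exactly the defining property of $DMA$, so the regularization-plus-diagonalization scheme (with all approximating currents of equal mass $\int_{{\mathbb P}^n}\omega^n$) goes through verbatim. Your translation of sections of $\mathcal O(d_j)$ into homogeneous polynomials, and of the well-definedness of the wedge product into finiteness of the common zero set via \cite[Corollary 2.11, Proposition 2.12]{D93}, is also as intended. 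You should therefore make the fallback the actual proof, or equivalently observe explicitly that the hypothesis of Theorem \ref{T:approx2} is used only through convergence along bounded decreasing approximants, which $\varphi\in DMA({\mathbb P}^n,\omega)$ supplies by definition.
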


\par This follows immediately by applying Theorem \ref{T:approx2} to the singular metric on $L$ determined by $\varphi$.
\end{Example}

\section{Equidistribution for sections of adjoint bundles}\label{S:rmad}
We return here to a general framework analogous to \cite{CM11} and consider sections of adjoint bundles. The benefit is that the analysis of the Bergman kernel becomes easier, since its definition doesn't depend in this case on the ground metric and $L^2$ estimates do not involve the Ricci curvature. This is especially useful when dealing with singular spaces or metrics and was already done for orbifolds in \cite{CM12}. We consider the following setting:

\smallskip

(A) $X$ is a complex manifold of dimension $n$ (not necessarily compact), $\Sigma$ is a compact analytic subvariety of $X$, and $\Omega$ is a smooth positive $(1,1)$ form on $X\setminus\Sigma$.

\smallskip

(B) $(L,h)$ is a holomorphic line bundle on $X$ with a singular Hermitian metric $h$ with positive curvature current $c_1(L,h)\geq0$ on $X$.

\smallskip

\par Consider the space $H^0_{(2)}(X\setminus\Sigma,L^p\otimes K_X)$ of $L^2$-holomorphic sections of $L^p\otimes K_X\mid_{_{X\setminus\Sigma}}$ relative to the metrics $h_p$ on $L^p$ induced by $h$, $h^{K_{X}}$ on $K_X\mid_{_{X\setminus\Sigma}}$ induced by $\Omega$ and the volume form  $\Omega^n$ on $X\setminus\Sigma$, endowed with the inner product
$$(S,S^{\,\prime})_p=\int_{X\setminus\Sigma}\langle S,S^{\,\prime}\rangle_{h_p,\Omega}\,\Omega^n\,,\;\;S,S^{\,\prime}\in H^0_{(2)}(X\setminus\Sigma,L^p\otimes K_X).$$
The interesting point is that the space $H^0_{(2)}(X\setminus\Sigma,L^p\otimes K_X)$ does not depend on the choice of the form $\Omega$. Indeed, for any $(n,0)$-form $S$ with values in $L^p$, and any metrics $\Omega$, $\Omega_1$ on $X\setminus\Sigma$, we have pointwise $\vert S\vert_{h_p,\Omega}^2\Omega^n=\vert S\vert_{h_p,\Omega_1}^2\Omega_1^n$. Therefore, we can take $\Omega$ to be a smooth positive (1,1) form on $X$. Since $c_1(L,h)\geq0$, the metric $h$ has psh local weights, so it is locally bounded below away from zero. Thus sections $S\in H^0_{(2)}(X\setminus\Sigma,L^p\otimes K_X)$ are locally integrable on $X$ (with respect to the Lebesgue measure). Skoda's lemma \cite[Lemma\,2.3.22]{MM07} shows that sections in $H^0_{(2)}(X\setminus\Sigma,L^p\otimes K_X)$ extend holomorphically to $X$, therefore $H^0_{(2)}(X\setminus\Sigma,L^p\otimes K_X)\subset H^0(X,L^p\otimes K_X)$ and
\begin{eqnarray*}
H^0_{(2)}(X\setminus\Sigma,L^p\otimes K_X)&=&H^0_{(2)}(X,L^p\otimes K_X)\\
&=&\Big\{S\in H^0(X,L^p\otimes K_X):\int_{X}|S|^2_{h_p,\widetilde\Omega}\,\widetilde\Omega^n<\infty\Big\}\,,
\end{eqnarray*}
where $\widetilde\Omega$ is any smooth positive $(1,1)$ form on $X$.

\par Let $d_p:=\dim H^0_{(2)}(X,L^p\otimes K_X)\in\mathbb N\cup\{\infty\}$ and let $\{S^p_j\}_j$ be an orthonormal basis of $H^0_{(2)}(X,L^p\otimes K_X)$. Denote by $P_p$ the Bergman kernel function defined by
\begin{equation}\label{e:Bergfcn}
P_p(x)=\sum_{j=1}^{d_p}|S^p_j(x)|_{h_p,\Omega}^2\,,\;\;|S^p_j(x)|_{h_p,\Omega}^2:=\langle S_j^p(x),S_j^p(x)\rangle_{h_p,\Omega}\,,\;x\in X\,.
\end{equation}
We denote by $\gamma_p$ the Fubini-Study current of the space $H^0_{(2)}(X,L^p\otimes K_X)$, defined by
\begin{equation}\label{e:gammap1}
\gamma_p\mid_{_{U}}=\frac{1}{2}\,dd^c\log\Big(\sum_{j=1}^{d_p}|s_j^p|^2\Big),\;U\subset X\,\text{ open}\,,
\end{equation}
where $S_j^p=s_j^p\,e^{\otimes p}\otimes e'$, and $e$, $e'$ are local holomorphic frames for $L$, $K_X$ on $U$. As in \cite[Lemma 3.2]{CM11} (see also \cite[(3.48)]{MM08}) we see that the currents $\gamma_p$ are independent of the choice of basis $\{S_j^p\}$ and we have
$$\frac{1}{p}\,\gamma_p=c_1(L,h)+\frac{1}{p}\,c_1(K_X,h^{K_X})+\frac{1}{2p}\,dd^c\log P_p\,.$$
By taking $\Omega$ to be smooth on $X$, we have $\frac{1}{p}\,c_1(K_X,h^{K_X})\to0$ locally uniformly as $p\to\infty$.

\par We denote by $[S=0]$ the current of integration (with multiplicities) over the analytic hypersurface $\{S=0\}$ determined by a nontrivial section $S\in H^0(X,L^p\otimes K_X)$. The Lelong-Poincar\'e equation reads
$$\frac{1}{p}\,[S=0]=c_1(L,h)+\frac{1}{p}\,c_1(K_X,h^{K_X})+\frac{1}{2p}\,dd^c\log |S|_{h_p,\Omega}^2\,.$$
When $X$ is compact, let ${\mathcal S}^p$ be the unit sphere of $(H^0_{(2)}(X,L^p\otimes K_X),\|\cdot\|_{L^2})$ and let $\lambda_p$ be the normalized surface measure on ${\mathcal S}^p$. We denote by $\lambda_p^k$ the product measure on $({\mathcal S}^p)^k$. We also consider the probability space ${\mathcal S}_\infty=\prod_{p=1}^\infty{\mathcal S}^p$ endowed with the probability measure $\lambda_\infty=\prod_{p=1}^\infty\lambda_p$\,.

\begin{Theorem}
Let $X,\,\Sigma,\,\Omega,\,(L,h)$ satisfy (A), (B) and assume that there exists an open set $G\subset X$ such that $c_1(L,h)$ is strictly positive on $G\setminus\Sigma$, i.e. $c_1(L,h)\geq\varepsilon\,\Omega$ on $G\setminus\Sigma$, for some continuous function $\varepsilon:G\setminus\Sigma\longrightarrow(0,+\infty)$.
\\[2pt]
(i) Assume that $X\setminus\Sigma$ admits a complete K\"ahler metric. Then $\frac{1}{p}\,\gamma_p\to c_1(L,h)$ weakly as currents on $G$ and $\frac{1}{p}\,\log P_p\to 0$  in $L^1_{loc}(G,\Omega^n)$, as $p\to\infty$.
\\[2pt]
(ii) Assume that $X$ is a compact K\"ahler manifold. Then $\frac{1}{p}\,[\sigma_p=0]\to c_1(L,h)$ weakly on $G$ as $p\to\infty$, for $\;\lambda_\infty\text{-a.e. sequence }\{\sigma_p\}_{p\geq1}\in{\mathcal S}_\infty$.
\end{Theorem}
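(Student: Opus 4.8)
The plan is to reduce the statement to the Bergman kernel asymptotics for adjoint bundles and then run the standard ``weak asymptotics $\Rightarrow$ equidistribution'' machinery, exactly as in \cite{CM11}, but keeping track of the fact that all the hypotheses are now only local on $G$. The key new input compared with Section~\ref{S:pfat} is that the $L^2$-space $H^0_{(2)}(X,L^p\otimes K_X)$ does not involve the Ricci curvature of $\Omega$, so the $L^2$-estimates used to produce peak sections on $G$ require only the positivity of $c_1(L,h)$ on $G\setminus\Sigma$ together with the existence of a complete K\"ahler metric on $X\setminus\Sigma$ (for part (i)), and $X$ compact K\"ahler (for part (ii)).

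For part (i), the core estimate is the following pair of bounds for $P_p$ on a fixed relatively compact subset of $G$: an easy upper bound $\frac{1}{p}\log P_p\le o(1)$ coming from the sub-mean-value inequality applied to $|S|^2_{h_p,\Omega}$ together with the fact that $c_1(L,h)\geq 0$ and $\frac1p c_1(K_X,h^{K_X})\to 0$; and a lower bound obtained by constructing, at each point $x\in G\setminus\Sigma$, a holomorphic peak section using H\"ormander's $\overline\partial$-estimate on $X\setminus\Sigma$ (here one uses the complete K\"ahler metric to solve $\overline\partial$ with $L^2$-bounds, and the strict positivity $c_1(L,h)\ge\varepsilon\Omega$ on $G\setminus\Sigma$ to gain the required curvature for the weight; the resulting section extends across $\Sigma$ by Skoda's lemma as already noted in the text). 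This gives $\frac1p\log P_p\to 0$ pointwise on $G\setminus\Sigma$, hence a.e.; combined with the uniform upper bound and the sub-mean-value property one upgrades this to $L^1_{loc}(G,\Omega^n)$ convergence via a Hartogs-type lemma for the qpsh functions $\frac1p\log P_p$ (locally, these are differences of the qpsh $\frac{1}{2p}\log\big(\sum|s_j^p|^2\big)$ and a fixed smooth weight). Applying $\frac12 dd^c$ to $\frac1p\log P_p\to 0$ and using the identity $\frac1p\gamma_p=c_1(L,h)+\frac1p c_1(K_X,h^{K_X})+\frac{1}{2p}dd^c\log P_p$ from the excerpt then yields $\frac1p\gamma_p\to c_1(L,h)$ weakly on $G$.

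For part (ii), with $X$ compact K\"ahler one has the dimension growth $d_p\le Cp^n$ and the variance/concentration estimate for the random zero divisor: writing $\frac1p[\sigma_p=0]-\frac1p\gamma_p=\frac{1}{2p}dd^c\log(|\sigma_p|^2/P_p)$ and noting $\log(|\sigma_p|^2/P_p)$ has controlled $L^1(\mathcal S^p,\lambda_p)$-norm (a computation on the sphere, as in \cite{ShZ99,DS06b}), one gets that for a fixed test form the expectation of the pairing converges and the probability of a deviation $>\delta$ is summable in $p$; Borel--Cantelli over a countable dense family of test forms then gives $\lambda_\infty$-a.e.\ convergence $\frac1p[\sigma_p=0]\to c_1(L,h)$ on $G$. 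The main obstacle, and the place where care is genuinely needed, is the peak-section construction in part (i): one must solve $\overline\partial$ on the \emph{noncompact} manifold $X\setminus\Sigma$ with a singular weight coming from $h$, ensure the correction term is small precisely on $G\setminus\Sigma$ where strict positivity is available, and then invoke Skoda extension to return to genuine sections over $X$ — this is exactly the technical heart carried over from \cite[Theorems 1.1, 1.2]{CM11}, now localized to $G$.
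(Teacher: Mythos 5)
Your argument is essentially the paper's: the proof there simply repeats the proofs of \cite[Theorems 1.1, 1.6]{CM12} with the orbifold regular locus replaced by $X\setminus\Sigma$, and those proofs follow exactly your scheme (upper bound for $P_p$ via the sub-mean-value inequality, lower bound via $L^2$-peak sections for $L^p\otimes K_X$ solved by $\overline\partial$-estimates on the complete K\"ahler manifold $X\setminus\Sigma$ using the strict positivity on $G\setminus\Sigma$ and Skoda extension, then a Hartogs-type $L^1_{loc}$ argument and $dd^c$ for (i), and expectation/variance estimates with Borel--Cantelli over a countable family of test forms for (ii)). The one caution is that under (B) the weight of $h$ need not be continuous on $G\setminus\Sigma$, so your ``pointwise on $G\setminus\Sigma$'' and ``fixed smooth weight'' statements are only valid at points where the (genuinely singular) local weight is finite; this is exactly the technicality handled by the $L^2$/Ohsawa--Takegoshi-type peak-section construction in the cited proofs.
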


\begin{proof}
To prove $(i)$, we repeat the proof of \cite[Theorem\,1.1]{CM12}, by replacing the orbifold regular locus $X^{orb}_{reg}$ in loc.\ cit.\ by $X\setminus\Sigma$\,. The proof of $(ii)$ is analogous to the proof of \cite[Theorem\,1.6]{CM12}.
\end{proof}

\par Consider now the following condition:

\smallskip

(B$^\prime$) $(L,h)$ is a holomorphic line bundle on $X$ with a singular Hermitian metric $h$ on $X$ such that $h$ is \emph{continuous} on $X\setminus\Sigma$ and $c_1(L,h)\geq0$ on $X$.

\smallskip

\par Proceeding as in the proof of Theorem \cite[Theorem\,1.1]{CM11} we obtain the following:

\begin{Theorem}\label{T:mt}
Let  $X,\,\Sigma,\,\Omega,\,(L,h)$ satisfy (A), (B\,$^\prime$) and assume that \begin{equation}\label{e:mainhyp}
\lim_{p\to\infty}\frac{1}{p}\,\log P_p(x)=0, \text{ locally uniformly on } X\setminus\Sigma\,.
\end{equation}
Then $\frac{1}{p}\,\gamma_p\to c_1(L,h)$ weakly on $X$. If, in addition, $\dim\Sigma\leq n-k$ for some $2\leq k\leq n$, then the currents $\gamma^k$ and $\gamma_p^k$ are well defined on $X$, respectively on each relatively compact neighborhood of $\Sigma$, for all $p$ sufficiently large. Moreover, $\frac{1}{p^k}\,\gamma_p^k\to\gamma^k$ weakly on $X$.
\end{Theorem}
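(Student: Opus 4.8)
The plan is to adapt to the adjoint‐bundle setting the scheme of the proof of \cite[Theorem 1.1]{CM11} (see also \cite{CM12}).

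\emph{Weak convergence of $\tfrac1p\gamma_p$.} First, work on a small open set $U$ carrying holomorphic frames $e$ of $L$ and $e'$ of $K_X$; let $\varphi$ be the weight of $h$ on $U$, so that $c_1(L,h)|_U=dd^c\varphi$, $\varphi$ is psh on $U$ and, by (B$'$), continuous on $U\setminus\Sigma$, and set $v_p=\tfrac12\log\sum_j|s_j^p|^2$ with $S_j^p=s_j^p\,e^{\otimes p}\otimes e'$, so that $\gamma_p|_U=dd^cv_p$. Since $\Omega$ is chosen smooth on $X$, taking local potentials in the Bergman identity following \eqref{e:gammap1} gives $\tfrac1pv_p=\varphi+\tfrac1{2p}\log P_p+\tfrac1p\,b$, with $b$ a fixed smooth weight of $h^{K_X}$. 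By \eqref{e:mainhyp} the middle term tends to $0$ locally uniformly on $X\setminus\Sigma$, hence $\tfrac1pv_p\to\varphi$ locally uniformly on $X\setminus\Sigma$. The $\tfrac1pv_p$ are psh and locally uniformly bounded above, so by the standard compactness properties of psh functions, and since $\Sigma$ is Lebesgue‐null, $\tfrac1pv_p\to\varphi$ in $L^1_{loc}(X)$: no subsequence can converge to $-\infty$, the limit being already prescribed on the full‐measure set $X\setminus\Sigma$. Applying $dd^c$ yields $\tfrac1p\gamma_p\to c_1(L,h)$ weakly on $X$.

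\emph{Well‐definedness of the powers.} Since $h$ is continuous on $X\setminus\Sigma$, $\varphi$ is locally bounded on $X\setminus\Sigma$, so its unbounded locus lies in $\Sigma$, of dimension $\le n-k$; by Demailly's criterion \cite[Corollary 2.11]{D93} the current $\gamma^k=c_1(L,h)^k=(dd^c\varphi)^k$ is a well‐defined positive closed $(k,k)$‐current on $X$. On a relatively compact neighbourhood $W$ of $\Sigma$ one checks, using \eqref{e:mainhyp}, that for $p$ large the base locus $\{v_p=-\infty\}\cap W$ is contained in $\Sigma$, hence of dimension $\le n-k$, so \cite[Corollary 2.11]{D93} again applies to give $\gamma_p^k=(dd^cv_p)^k$ well defined on $W$; away from $\Sigma$ it is well defined by Bedford–Taylor \cite{BT76,BT82} wherever $P_p>0$.

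\emph{Convergence of the powers.} I would argue by induction on $k$, the case $k=1$ being the first step. Fix $2\le k\le n$, set $S:=c_1(L,h)^{k-1}$, $S_p:=\tfrac1{p^{k-1}}\gamma_p^{k-1}$, and assume $S_p\to S$ weakly on $X$. On $X\setminus\Sigma$ the $\tfrac1pv_p$ are locally bounded psh functions converging locally uniformly to the locally bounded psh function $\varphi$, so by the continuity of the Monge–Amp\`ere operator along such sequences \cite{BT76,BT82}, $\tfrac1{p^k}\gamma_p^k=(dd^c\tfrac1pv_p)^k\to(dd^c\varphi)^k=c_1(L,h)^k$ weakly on $X\setminus\Sigma$. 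To pass to all of $X$, use the local identities $\tfrac1{p^k}\gamma_p^k=dd^c(\tfrac1pv_p\,S_p)$ and $c_1(L,h)^k=dd^c(\varphi\,S)$, valid by Demailly's inductive definition of the wedge product; it then suffices to show $\tfrac1pv_p\,S_p\to\varphi\,S$ weakly in a neighbourhood of $\Sigma$. Since $\dim\Sigma\le n-k<n-(k-1)$, the positive closed $(k-1,k-1)$‐current $S$ carries negligible trace mass on shrinking neighbourhoods of $\Sigma$, and therefore so do the $S_p$ for $p$ large (from $S_p\to S$ and upper semicontinuity of mass on compact sets). Combining this with the locally uniform convergence $\tfrac1pv_p\to\varphi$ off $\Sigma$, the local uniform upper bound on the $\tfrac1pv_p$, a Chern–Levine–Nirenberg estimate bounding $\int(-\tfrac1pv_p)\,S_p\wedge\Omega^{n-k+1}$ near $\Sigma$, and the local integrability of $\varphi$ with respect to the trace measure of $S$, one gets the uniform integrability needed to conclude $\tfrac1pv_p\,S_p\to\varphi\,S$. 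Applying $dd^c$ completes the induction.

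\emph{Main difficulty.} The weak convergence of $\tfrac1p\gamma_p$ and the analysis on $X\setminus\Sigma$ are routine; the heart of the matter — and the step I expect to be delicate — is the control near $\Sigma$, namely keeping $\int_V(-\tfrac1pv_p)\,S_p\wedge\Omega^{n-k+1}$ uniformly bounded and ruling out that mass of $\tfrac1{p^k}\gamma_p^k$ is created on or lost from $\Sigma$ in the limit, so that any weak limit coincides with $c_1(L,h)^k$. This is exactly where $\dim\Sigma\le n-k$ is essential: it is the threshold at which Demailly's wedge products are defined and at which the lower‐order currents $S_p$, $S$ do not charge $\Sigma$, which is what makes the Chern–Levine–Nirenberg/uniform‐integrability argument run. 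This point is handled as in the proof of \cite[Theorem 1.1]{CM11}.
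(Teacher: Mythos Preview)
Your proposal is correct and follows essentially the same approach as the paper: the paper's own proof of this theorem consists of the single sentence ``Proceeding as in the proof of \cite[Theorem 1.1]{CM11} we obtain the following,'' and your sketch does exactly this, spelling out how that argument carries over to the adjoint-bundle setting (the only new, harmless ingredient being the smooth term $\tfrac1p\,c_1(K_X,h^{K_X})$). Your identification of the delicate step---controlling mass near $\Sigma$ via Chern--Levine--Nirenberg estimates under the hypothesis $\dim\Sigma\le n-k$---is precisely where the work in \cite{CM11} lies.
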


\par By slight modifications of the proofs of \cite[Theorems\,1.2,\,4.3]{CM11} we obtain:

\begin{Theorem}\label{T:SZ} Let  $X,\,\Sigma,\,\Omega,\,(L,h)$ satisfy (A), (B\,$^\prime$). Assume that $X$ is compact, $\dim\Sigma\leq n-k$ for some $1\leq k\leq n$, and that \eqref{e:mainhyp} holds. Then, for all $p$ sufficiently large:

\par (i) $[\sigma=0]:=[\sigma_1=0]\wedge\ldots\wedge[\sigma_k=0]$ is a well defined positive closed current of bidegree (k,k) on $X$, for $\lambda_p^k$-a.e. $\sigma=(\sigma_1,\dots,\sigma_k)\in({\mathcal S}^p)^k$.

\par (ii) The expectation $E_p^k[\sigma=0]$ of the current-valued random variable $\sigma\to[\sigma=0]$, given by
$\langle E_p^k[\sigma=0],\varphi\rangle=\int_{({\mathcal S}^p)^k}\langle[\sigma=0],\varphi\rangle\,d\lambda_p^k$,
where $\varphi$ is a test form on $X$, is a well defined current and $E_p^k[\sigma=0]=\gamma_p^k$.

\par (iii) We have $\frac{1}{p^k}\,E_p^k[\sigma=0]\to\gamma^k$ as $p\to\infty$, weakly in the sense of currents on $X$.
\end{Theorem}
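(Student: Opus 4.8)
The plan is to follow the proofs of \cite[Theorems 1.2 and 4.3]{CM11} almost verbatim, substituting the adjoint-bundle data set up above: the Lelong--Poincar\'e formula $\frac1p\,[S=0]=c_1(L,h)+\frac1p\,c_1(K_X,h^{K_X})+\frac1{2p}\,dd^c\log|S|^2_{h_p,\Omega}$ (whose middle term tends to $0$ locally uniformly once $\Omega$ is chosen smooth on $X$), the Bergman kernel function $P_p$ of $H^0_{(2)}(X,L^p\otimes K_X)$, and the independence of the $L^2$ structure from the reference form $\Omega$. The non-compactness of $X\setminus\Sigma$ and the presence of the singular set $\Sigma$ are dealt with exactly as in Theorem \ref{T:mt}: one fixes a relatively compact neighbourhood $V$ of $\Sigma$ with $\codim\Sigma\geq k$, argues separately on $V$ and on the compact set $X\setminus V\subset X\setminus\Sigma$, and patches. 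Note that (iii) is then immediate from (ii) and Theorem \ref{T:mt}, since $\frac1{p^k}\,E_p^k[\sigma=0]=\frac1{p^k}\,\gamma_p^k\to\gamma^k$ weakly on $X$; thus the real content is in (i) and (ii).

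For (i) I would run the usual Bertini/transversality induction. Fix $p$ large enough that, by \eqref{e:mainhyp}, the base locus of $H^0_{(2)}(X,L^p\otimes K_X)$ is disjoint from $X\setminus V$. Arguing by induction on $k$ via Fubini on $(\mathcal S^p)^k=\mathcal S^p\times(\mathcal S^p)^{k-1}$, it suffices to prove: for $\lambda_p^{k-1}$-a.e.\ tuple $(\sigma_1,\dots,\sigma_{k-1})$ all of whose zero-set intersections $\{\sigma_{i_1}=0\}\cap\ldots\cap\{\sigma_{i_j}=0\}$ have codimension $\geq j$, and for $\lambda_p$-a.e.\ $\sigma_k$, the hypersurface $\{\sigma_k=0\}$ contains no irreducible component $W$ of any such intersection. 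Every component $W$ that could fail this has $\dim W\geq n-k+1>\dim\Sigma$, so $W\not\subset\Sigma$ and $W\setminus\Sigma$ is a non-empty Zariski-open subset of $W$; a point of $W\setminus\Sigma$ off the base locus (such a point exists by \eqref{e:mainhyp}, after enlarging $p$ if necessary) gives a section not vanishing identically on $W$, whence $\{S:S|_W\equiv0\}$ is a proper linear subspace of $H^0_{(2)}(X,L^p\otimes K_X)$. Since a fixed lower tuple produces only finitely many such $W$, the exceptional set of $k$-tuples is a countable union of proper real-linear slices of spheres, hence $\lambda_p^k$-null. For a good tuple every intermediate intersection has the expected codimension, so by Demailly's criterion \cite[Corollary 2.11, Proposition 2.12]{D93} the current $[\sigma_1=0]\wedge\ldots\wedge[\sigma_k=0]$ is well defined, positive and closed of bidegree $(k,k)$, and equals integration with multiplicities along the intersection cycle.

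For (ii), fix a K\"ahler form $\omega$ on $X$; as in \cite{CM11} the mass $\int_X[\sigma=0]\wedge\omega^{n-k}$ equals $\int_X c_1(L^p\otimes K_X)^k\wedge\omega^{n-k}$ for every good tuple $\sigma$, so $\sigma\mapsto\langle[\sigma=0],\varphi\rangle$ is bounded, hence in $L^1(\lambda_p^k)$, for each test form $\varphi$, and $E_p^k[\sigma=0]$ is a well-defined positive closed current. To identify it with $\gamma_p^k$ I would induct on $k$. For $k=1$: Lelong--Poincar\'e gives $[\sigma=0]-\gamma_p=\frac12\,dd^c\log\!\big(|\sigma|^2_{h_p,\Omega}/P_p\big)$; writing $\sigma=S_a=\sum_j a_jS_j^p$ with $a\in\mathbf S^{2d_p-1}$, unitary invariance of $\lambda_p$ shows that $x\mapsto\int\log|S_a(x)|^2_{h_p,\Omega}\,d\lambda_p(a)$ equals $\log P_p(x)$ plus a constant depending only on $d_p$, so $E_p^1[\sigma=0]-\gamma_p=\frac12\,dd^c(\text{const})=0$ --- the interchange of $E_p$ with $dd^c$ being legitimate because $a\mapsto\log|S_a(x)|^2_{h_p,\Omega}$ lies in $L^1(\lambda_p)$ locally uniformly in $x$ off the (measure-zero) base locus. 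For the inductive step one applies the same computation ``with coefficients in'' the fixed current $T:=[\sigma_1=0]\wedge\ldots\wedge[\sigma_{k-1}=0]$: by Fubini $E_p^k[\sigma=0]=E_p^{k-1}\big(E_p^1([\sigma_k=0]\wedge T)\big)$; the $k=1$ argument together with the fact, from (i), that $\log|\sigma_k|^2_{h_p,\Omega}\in L^1_{loc}(|T|)$ for $\lambda_p$-a.e.\ $\sigma_k$ gives $E_p^1([\sigma_k=0]\wedge T)=\gamma_p\wedge T$, and the induction hypothesis then yields $E_p^k[\sigma=0]=\gamma_p\wedge\gamma_p^{k-1}=\gamma_p^k$, all the intermediate products being well defined by Theorem \ref{T:mt}.

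The hard part, I expect, will be part (i) in a neighbourhood of $\Sigma$: making the transversality argument produce a genuinely $\lambda_p^k$-null exceptional set forces the two hypotheses to work in tandem --- $\dim\Sigma\leq n-k$ keeps every troublesome component of the intersection cycles out of $\Sigma$, while \eqref{e:mainhyp} supplies enough global sections over $X\setminus\Sigma$ to cut those components down --- and some care is needed to make the threshold for $p$ uniform. The Fubini bookkeeping in (ii) and the interchange of expectation with $dd^c$ against the singular current $T$ are the remaining technical points, but they are routine adaptations of the arguments in \cite{CM11}.
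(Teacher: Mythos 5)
Your overall route is the same as the paper's: the paper obtains Theorem \ref{T:SZ} precisely by transplanting the proofs of \cite[Theorems 1.2, 4.3]{CM11} to the adjoint-bundle spaces $H^0_{(2)}(X,L^p\otimes K_X)$, which is what you do — the Bertini-type induction for (i), the unitary-invariance computation $E_p^1[\sigma=0]=\gamma_p$ with induction ``with coefficients in $T$'' for (ii), and the reduction of (iii) to (ii) together with Theorem \ref{T:mt} are exactly the intended arguments.

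There is, however, one step in (i) whose justification as written does not work: for a component $W\not\subset\Sigma$ with $\dim W\geq n-k+1$ you claim a point of $W\setminus\Sigma$ off the base locus exists ``by \eqref{e:mainhyp}, after enlarging $p$ if necessary''. But $W$ is a component of intersections of zero divisors of sections of $L^p\otimes K_X$, hence depends on $p$, so you cannot enlarge $p$ after fixing $W$; and for a fixed large $p$, \eqref{e:mainhyp} only forces the base locus $Bs_p=\{P_p=0\}$ into a fixed neighbourhood $V$ of $\Sigma$, which a priori does not prevent $W$ itself from lying inside $Bs_p$ (squeezed into $V$), in which case \emph{every} section vanishes on $W$ and your ``proper linear slice'' argument collapses. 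The gap is repairable, and uniformly in $p$: choose once and for all a neighbourhood $V$ of $\Sigma$ that deformation retracts onto $\Sigma$; then $H_{2d}(V;\mathbb{R})\cong H_{2d}(\Sigma;\mathbb{R})=0$ for $d>\dim\Sigma$, so a compact irreducible analytic subset $W\subset V$ of dimension $d>\dim\Sigma$ would satisfy $\int_W\omega^d=0$ for a K\"ahler form $\omega$ (the class of $[W]$ in $H_{2d}(V;\mathbb{R})$ vanishes), contradicting positivity of its volume; hence $V$ contains no compact analytic subsets of dimension $>\dim\Sigma$. By \eqref{e:mainhyp} there is $p_0$ with $Bs_p\subset V$ for $p\geq p_0$, so every irreducible subset of $Bs_p$ has dimension $\leq\dim\Sigma\leq n-k$, and consequently no $W$ as above can be contained in $Bs_p$; this supplies the section not vanishing identically on $W$ and makes your exceptional set genuinely $\lambda_p^k$-null, with a threshold $p_0$ independent of the tuple. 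With this lemma inserted (it is the point you yourself flagged as needing care), the rest of (i), and your arguments for (ii) and (iii), go through as in \cite{CM11}.
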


\begin{Theorem}\label{T:SZ1} Let  $X,\,\Sigma,\,\Omega,\,(L,h)$ satisfy (A), (B\,$^\prime$) such that $X$ is compact and \eqref{e:mainhyp} holds. Then $\frac{1}{p}\,[\sigma_p=0]\to c_1(L,h)$ as $p\to\infty$ weakly on $X$, for $\;\lambda_\infty\text{-a.e. sequence }\{\sigma_p\}_{p\geq1}\in{\mathcal S}_\infty$.
\end{Theorem}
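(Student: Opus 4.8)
The statement to be proven is Theorem \ref{T:SZ1}: under the hypotheses (A), (B$^\prime$), compactness of $X$, and the Bergman kernel asymptotics \eqref{e:mainhyp}, one has $\frac{1}{p}\,[\sigma_p=0]\to c_1(L,h)$ weakly on $X$ for $\lambda_\infty$-almost every sequence $\{\sigma_p\}$. The strategy is the classical variance-estimate-plus-Borel--Cantelli argument of Shiffman--Zelditch, adapted to the present setting exactly as in \cite[Theorem\,1.2]{CM11}. The key input is Theorem \ref{T:SZ} with $k=1$, which already gives that the expectation satisfies $E_p^1[\sigma=0]=\gamma_p$ and $\frac{1}{p}\,\gamma_p\to c_1(L,h)$ weakly. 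So the content of Theorem \ref{T:SZ1} over Theorem \ref{T:SZ} is the passage from convergence in expectation to almost-sure convergence of individual sequences.

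First I would reduce weak convergence of currents to a countable family of scalar tests: fix a countable set $\{\varphi_m\}_{m\geq1}$ of smooth test forms of bidegree $(n-1,n-1)$ that is dense in the relevant topology, so that $\frac{1}{p}[\sigma_p=0]\to c_1(L,h)$ weakly is equivalent to $\langle \frac{1}{p}[\sigma_p=0],\varphi_m\rangle\to\langle c_1(L,h),\varphi_m\rangle$ for every $m$. By Theorem \ref{T:SZ}(iii) (case $k=1$) the expectation of each such scalar converges, so it suffices to show that for each fixed $m$ the random variables $Y_p:=\langle \frac{1}{p}[\sigma_p=0]-\frac{1}{p}\gamma_p,\varphi_m\rangle$ tend to $0$ for $\lambda_\infty$-a.e. sequence; a countable intersection of full-measure sets is still full measure. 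The standard route is a variance bound of the form $\mathrm{Var}(\langle[\sigma_p=0],\varphi_m\rangle)=O(p)$, or more precisely an estimate on $\mathbb{E}\big|\langle[\sigma_p=0]-\gamma_p,\varphi_m\rangle\big|^2$ that grows at most polynomially in $p$; combined with the $\frac{1}{p}$ normalization and a Chebyshev/Borel--Cantelli argument along a suitable subsequence (with interpolation between subsequence terms using the uniform mass bound), this yields $Y_p\to0$ almost surely. The variance estimate itself rests on the identity expressing $\log|\sigma_p|_{h_p,\Omega}^2-\log P_p$, for $\sigma_p=\sum a_j S_j^p$ with $a$ uniform on the sphere, as $\log|\langle a,u(x)\rangle|^2$ where $u(x)$ is the unit vector $(S_j^p(x))/\sqrt{P_p(x)}$, together with the fact that $\int_{\mathbf S^{2d_p-1}}\big(\log|\langle a,u\rangle|^2\big)^2\,d\lambda_p(a)$ is bounded uniformly in $d_p$ and in $u$ (a computation with the explicit distribution of $|\langle a,u\rangle|^2$, which is Beta-distributed); this is precisely the estimate carried out in \cite[Section 4]{CM11}, and I would cite it rather than reproduce it.

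The main obstacle is not the probabilistic scheme, which is robust, but ensuring that every analytic ingredient used in \cite{CM11} survives the replacement of $L^p$ by the adjoint bundle $L^p\otimes K_X$ and of the ambient hypotheses by (A), (B$^\prime$). Concretely: (1) one must know that $[\sigma_p=0]$ is a genuine current of integration for $\lambda_p$-a.e. $\sigma_p$ and that its mass is controlled — here one uses that $\frac{1}{p}[\sigma_p=0]=c_1(L,h)+\frac{1}{p}c_1(K_X,h^{K_X})+\frac{1}{2p}dd^c\log|\sigma_p|_{h_p,\Omega}^2$ from the Lelong--Poincaré equation stated in Section \ref{S:rmad}, so that all $\frac{1}{p}[\sigma_p=0]$ represent (up to the vanishing term $\frac1p c_1(K_X,h^{K_X})$) a fixed cohomology class and hence have uniformly bounded mass on the compact $X$; (2) the hypothesis \eqref{e:mainhyp} gives $\frac{1}{p}\log P_p\to0$ locally uniformly on $X\setminus\Sigma$, which is exactly what feeds the convergence of the expectation in Theorem \ref{T:SZ}(iii) and what one needs to control the "main term" while the variance controls the fluctuation; and (3) near $\Sigma$ one has no uniform control on $P_p$, but since $\Sigma$ has measure zero and the masses are uniformly bounded, no mass can escape to $\Sigma$ in the limit — this is the one place requiring a little care, handled as in \cite[Theorem\,1.2]{CM11} by testing against forms supported away from $\Sigma$ and then invoking the mass bound. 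Assembling these, the proof is a direct transcription: \emph{by slight modifications of the proof of \cite[Theorem\,1.2]{CM11}, replacing the curvature current computations by the adjoint-bundle versions recorded above and the exceptional set by the compact analytic set $\Sigma$, one obtains the claim.}
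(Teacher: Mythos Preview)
Your proposal is correct and takes essentially the same approach as the paper: the paper's own proof is simply the remark ``by slight modifications of the proofs of \cite[Theorems\,1.2,\,4.3]{CM11} we obtain'' the result, and you have spelled out precisely those modifications---the Shiffman--Zelditch variance estimate plus Borel--Cantelli scheme, with the adjoint-bundle Lelong--Poincar\'e formula replacing the one for $L^p$ and the set $\Sigma$ playing the role of the exceptional locus. The only cosmetic discrepancy is that the almost-sure statement in \cite{CM11} is Theorem~4.3 there (not~1.2), so you may wish to adjust the cross-reference.
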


\par Taking advantage of the fact that we work with adjoint bundles, we apply in the next result the analysis already used in \cite{CM12} (especially the resolution of the $\overline\partial$-equation on complete K\"ahler manifolds, cf.\ \cite[\S4.2-3]{CM12}).

\begin{Theorem}\label{T:mt2}
Let  $X,\,\Sigma,\,\Omega,\,(L,h)$ satisfy (A), (B\,$^\prime$) and let $k\leq\codim\Sigma$. Assume that $X\setminus\Sigma$ admits a complete K\"ahler metric and $c_1(L,h)$ is strictly positive on $X\setminus\Sigma$, i.e. $c_1(L,h)\geq\varepsilon\,\Omega$ on $X\setminus\Sigma$, for some continuous function $\varepsilon:X\setminus\Sigma\longrightarrow(0,+\infty)$. Then \eqref{e:mainhyp} holds and the conclusions of Theorem \ref{T:mt} hold. If, in addition, $X$ is compact, then $\frac{1}{p}\,[\sigma_p=0]\to c_1(L,h)$ as $p\to\infty$ weakly on $X$, for $\lambda_\infty\text{-a.e. sequence }\{\sigma_p\}_{p\geq1}\in{\mathcal S}_\infty$. Moreover, the conclusions of Theorem \ref{T:SZ} hold.
\end{Theorem}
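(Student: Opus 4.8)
The plan is to reduce the whole statement to the single Bergman kernel estimate \eqref{e:mainhyp}. Once $\frac1p\log P_p\to0$ locally uniformly on $X\setminus\Sigma$ is known, the convergences $\frac1p\,\gamma_p\to c_1(L,h)$ and $\frac{1}{p^k}\,\gamma_p^k\to\gamma^k$ follow at once from Theorem~\ref{T:mt} (here $k\le\codim\Sigma$ is the same as $\dim\Sigma\le n-k$), and when $X$ is compact the data $X,\Sigma,\Omega,(L,h)$ satisfy (A), (B$^\prime$) together with $\dim\Sigma\le n-k$ and \eqref{e:mainhyp}, i.e.\ precisely the hypotheses of Theorems~\ref{T:SZ} and~\ref{T:SZ1}; hence those conclusions hold as well. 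Thus the entire content of the theorem is: under the extra assumptions that $X\setminus\Sigma$ carries a complete K\"ahler metric and $c_1(L,h)\ge\varepsilon\,\Omega$ on $X\setminus\Sigma$, the relation \eqref{e:mainhyp} holds. The proof of this follows \cite[\S4.2--3]{CM12}, with the orbifold regular locus used there replaced by $X\setminus\Sigma$.

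First I would prove the upper estimate $\limsup_{p\to\infty}\frac1p\log P_p\le0$ locally uniformly on $X\setminus\Sigma$; this is the routine direction. Fix a compact $K\subset X\setminus\Sigma$, a point $x_0\in K$, a coordinate ball $B=B(x_0,r)\subset\subset X\setminus\Sigma$ and a local holomorphic frame of $L$ on $B$ with weight $\psi$. Since $h$ is continuous on $X\setminus\Sigma$, for any $\delta>0$ we may take $r$ (depending only on $\delta$ and $K$) so that $|\psi-\psi(x_0)|<\delta$ on $B$ for all $x_0\in K$. A local section $S=s\,e^{\otimes p}\otimes dz$ of $L^p\otimes K_X$ satisfies the $\Omega$-independent identity $|S|^2_{h_p,\Omega}\,\Omega^n=|s|^2e^{-2p\psi}\,dV$, so $|S|^2_{h_p,\Omega}\le C_K\,|s|^2e^{-2p\psi}$ pointwise on $B$ with $C_K$ uniform over $K$, and the sub-mean value inequality for the plurisubharmonic function $|s|^2$ gives, for $\|S\|_p=1$,
\[
|S(x_0)|^2_{h_p,\Omega}\le C_K\,|s(x_0)|^2e^{-2p\psi(x_0)}\le\frac{C_K}{\mathrm{vol}(B)}\int_B|s|^2e^{-2p\psi(x_0)}\,dV\le\frac{C_K\,e^{2p\delta}}{\mathrm{vol}(B)}\int_B|s|^2e^{-2p\psi}\,dV\le\frac{C_K\,e^{2p\delta}}{\mathrm{vol}(B)}\,.
\]
Hence $\frac1p\log P_p(x_0)\le2\delta+O(1/p)$ uniformly on $K$, and letting $\delta\to0$ gives the claim.

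The matching lower estimate $\liminf_{p\to\infty}\frac1p\log P_p\ge0$, locally uniformly, is the heart of the argument and the step I expect to be the main obstacle. Keeping $x_0\in K$ and $\delta>0$ as above, choose a cutoff $\chi$ with $\chi\equiv1$ on $B(x_0,r/2)$ and $\operatorname{supp}\chi\subset B(x_0,r)$, and form the smooth compactly supported $L^p$-valued $(n,0)$-form $\alpha_p=\chi\,e^{\otimes p}\otimes dz$, a section of $L^p\otimes K_X$ over $X\setminus\Sigma$ with $|\alpha_p(x_0)|^2_{h_p,\Omega}\ge c_K\,e^{-2p\psi(x_0)}$. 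Following the standard peak section construction of Tian and Demailly (cf.\ \cite[\S4.2--3]{CM12}) one solves $\db u_p=\db\alpha_p$ on the complete K\"ahler manifold $X\setminus\Sigma$, with respect to the metric $h_p$ twisted near $x_0$ by the singular weight $2n\log|z-x_0|$. Demailly's $L^2$-estimates for $\db$ on complete K\"ahler manifolds apply: $\db\alpha_p$ is supported in the annulus $B(x_0,r)\setminus B(x_0,r/2)\subset\subset X\setminus\Sigma$, on which the twisting weight is bounded and $c_1(L^p,h_p)=p\,c_1(L,h)\ge p\varepsilon_0\,\Omega$ with $\varepsilon_0=\inf\{\varepsilon(x):x\in K_r\}>0$ ($K_r$ a fixed compact neighborhood of $K$ in $X\setminus\Sigma$), so the twisted $L^2$-norm of $\db\alpha_p$ is finite and one obtains $\|u_p\|_p^2\le C\,p^{-1}e^{-2p(\psi(x_0)-\delta)}$. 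The singular weight forces $u_p(x_0)=0$, so $S_p:=\alpha_p-u_p$ is a nontrivial $L^2$-holomorphic section of $L^p\otimes K_X$ over $X\setminus\Sigma$; by Skoda's lemma \cite[Lemma\,2.3.22]{MM07} it extends holomorphically to $X$, and $S_p(x_0)=\alpha_p(x_0)$ gives $|S_p(x_0)|^2_{h_p,\Omega}\ge c_K\,e^{-2p\psi(x_0)}$, while $\|S_p\|_p^2\le 2\|\alpha_p\|_p^2+2\|u_p\|_p^2\le C'\,e^{-2p(\psi(x_0)-\delta)}$. Therefore $P_p(x_0)\ge|S_p(x_0)|^2_{h_p,\Omega}/\|S_p\|_p^2\ge (c_K/C')\,e^{-2p\delta}$, so $\frac1p\log P_p(x_0)\ge-2\delta+O(1/p)$, and $\delta\to0$ yields the lower bound. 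As all the constants involved (the uniform modulus of continuity of the local weights of $h$ over $K$, the lower bound $\varepsilon_0$, the comparison constants between $\Omega$ and the Euclidean metric on balls, $\mathrm{vol}(B)$, and $c_K$) are uniform over $K$, the convergence is locally uniform; this proves \eqref{e:mainhyp}.

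With \eqref{e:mainhyp} established, the remaining conclusions are exactly Theorems~\ref{T:mt},~\ref{T:SZ} and~\ref{T:SZ1}, as noted in the first paragraph. The essential difficulty, compared with the compact setting of \cite{CM11}, is carrying out the $\db$-solution with $L^2$-estimates on the \emph{noncompact} manifold $X\setminus\Sigma$; the completeness of the K\"ahler metric is precisely what makes Demailly's $L^2$-theory available there, and the choice to work with the adjoint bundle $L^p\otimes K_X$ is what keeps the argument clean, since the Bochner--Kodaira identity for $(n,1)$-forms carries no Ricci or torsion term, so that only the positivity of $c_1(L,h)$---and no curvature assumption on $X$ itself---is needed.
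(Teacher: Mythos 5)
Your proposal is correct and follows essentially the same route as the paper: reduce everything to the Bergman kernel estimate \eqref{e:mainhyp} and then invoke Theorems \ref{T:mt}, \ref{T:SZ} and \ref{T:SZ1}. The only difference is that the paper proves \eqref{e:mainhyp} by citing the argument of \cite[Theorem 4.7, \S4.2--3]{CM12} (with $X^{orb}_{reg}$ replaced by $X\setminus\Sigma$), whereas you spell out that same peak-section/$L^2$ $\db$-argument on the complete K\"ahler manifold $X\setminus\Sigma$ in detail, correctly using the adjoint bundle to avoid Ricci curvature terms.
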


\begin{proof}
The locally uniform convergence \eqref{e:mainhyp} follows by showing the estimate \cite[(12)]{CM12} as in \cite[Theorem\,4.7]{CM12}, whereby $X^{orb}_{reg}$ in loc.\ cit.\ is replaced by $X\setminus\Sigma$\,. Hence we can apply Theorems \ref{T:SZ} and \ref{T:SZ1} in this situation.
\end{proof}

\begin{proof}[Proof of Theorem \ref{T:approx4}]
We repeat the proof of Theorem \ref{T:approx}, working now with the spaces $H^0_{(2)}(X,L^p\otimes K_X)$ of $L^2$-holomorphic sections of adjoint bundles, and using Theorem \ref{T:mt2} instead of \cite[Theorem 5.4]{CM11} in Step 2 of the proof. Note that $X\setminus\Sigma$ admits a complete K\"ahler metric since $X$ is a compact K\"ahler manifold (cf.\ \cite{O87}; see also \cite[Lemma\,4.9]{CM12}).
\end{proof}
 
\par Along the same lines as above, there are versions of \cite[Theorems\,6.19,\,6.21]{CM11} for $n$-forms.

\begin{Theorem}\label{T:1conv}
Let $X$ be a $1$-convex manifold and $\Sigma\subset X$ be the exceptional set of $X$. Let $(L,h)$ be a holomorphic line bundle on $X$ with singular metric $h$ such that $c_1(L,h)\geq0$ on $X$, $c_1(L,h)$ is strictly positive on $X\setminus\Sigma$, and $h\mid_{X\setminus\Sigma}$ is continuous. Then \eqref{e:mainhyp} holds. In particular, the conclusions of Theorem \ref{T:mt} hold for the spaces $H^0_{(2)}(X,L^p\otimes K_X)$.
\end{Theorem}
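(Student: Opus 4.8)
The plan is to deduce Theorem~\ref{T:1conv} from Theorem~\ref{T:mt2}. First I would note that the exceptional set $\Sigma$ of the $1$-convex manifold $X$ is a compact analytic subvariety of $X$, so that, taking for $\Omega$ the fundamental form of any Hermitian metric on $X\setminus\Sigma$, the data $X,\Sigma,\Omega$ satisfy~(A); hypothesis~(B$^\prime$) and the strict positivity of $c_1(L,h)$ on $X\setminus\Sigma$ (meaning $c_1(L,h)\ge\varepsilon\,\Omega$ on $X\setminus\Sigma$ for some continuous $\varepsilon\colon X\setminus\Sigma\to(0,\infty)$, a condition independent of the chosen $\Omega$) are part of the hypotheses of Theorem~\ref{T:1conv}. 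Thus the only additional input required to apply Theorem~\ref{T:mt2} is that $X\setminus\Sigma$ carry a complete K\"ahler metric; granting this, Theorem~\ref{T:mt2} yields \eqref{e:mainhyp} and, through Theorem~\ref{T:mt}, both $\frac1p\,\gamma_p\to c_1(L,h)$ weakly on $X$ and the index-$k$ statements $\frac1{p^k}\,\gamma_p^k\to\gamma^k$ (the latter whenever $\dim\Sigma\le n-k$, which is available since $\codim\Sigma\ge1$).

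The substantive step is therefore to construct a complete K\"ahler metric on $X\setminus\Sigma$. I would invoke the Remmert reduction $\pi\colon X\to Y$: here $Y$ is a Stein space, $\pi$ is proper and biholomorphic over $Y\setminus\pi(\Sigma)$, and it contracts $\Sigma$ onto the finite set $\pi(\Sigma)=\{p_1,\dots,p_m\}$, so that $X\setminus\Sigma$ is biholomorphic to the open subset $Y\setminus\{p_1,\dots,p_m\}$ of $Y$, which is a smooth complex manifold. Such a manifold --- the complement of a finite set in a Stein space --- admits a complete K\"ahler metric by the standard construction: a (convexified) strictly plurisubharmonic exhaustion of $Y$, pulled back through $\pi$, provides completeness toward the end of $X$, while near each $p_i$ (and near the singular points of $Y$, if any) one adds a Poincar\'e-type potential of the shape $dd^c\bigl(-\log(-\log r_i)\bigr)$, with $r_i$ a local radius, glued in by a partition of unity so as to preserve positivity; this is exactly the construction of \cite[Lemma~4.9]{CM12} (see also \cite{O87}), now performed over the Stein space $Y$ rather than over a compact K\"ahler manifold.

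With this metric in hand, Theorem~\ref{T:mt2} applies verbatim and gives \eqref{e:mainhyp} together with the conclusions of Theorem~\ref{T:mt} for the spaces $H^0_{(2)}(X,L^p\otimes K_X)$, which is the assertion of Theorem~\ref{T:1conv}. I expect the main obstacle to be the complete-K\"ahler construction itself --- in particular, verifying positivity of the patched potential in a full neighborhood of $\Sigma$ --- rather than anything in the Bergman-kernel analysis, which has already been absorbed into Theorem~\ref{T:mt2} (and into \cite[\S4.2--3]{CM12}): there the remaining delicacy, namely that $c_1(L,h)$ is only semipositive near $\Sigma$, is handled by solving $\overline\partial$ on the complete K\"ahler manifold $X\setminus\Sigma$ and using that the $\overline\partial$-data of a peak section based at a point of $X\setminus\Sigma$ is supported away from $\Sigma$. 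Alternatively, one may skip the explicit metric and transcribe the arguments of \cite[Theorems~6.19,~6.21]{CM11} to the adjoint bundles $L^p\otimes K_X$, solving $\overline\partial$ with $L^2$-estimates on an exhaustion of $X$ by relatively compact strongly pseudoconvex sublevel sets containing $\Sigma$ and building peak sections at points of $X\setminus\Sigma$ from the strict positivity of $c_1(L,h)$ there, the matching upper bound for $\frac1p\,\log P_p$ coming as usual from the submean-value inequality for the weights of $h$, which are continuous off $\Sigma$.
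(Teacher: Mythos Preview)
Your approach is exactly the one in the paper: verify the hypotheses of Theorem~\ref{T:mt2}, with the only nontrivial point being that $X\setminus\Sigma$ carries a complete K\"ahler metric, and then read off \eqref{e:mainhyp} and the conclusions of Theorem~\ref{T:mt}. The paper's proof simply asserts the existence of the complete K\"ahler metric and applies Theorem~\ref{T:mt2}; your sketch via the Remmert reduction (identifying $X\setminus\Sigma$ with the smooth locus of a Stein space with finitely many marked points and building the metric from a strictly psh exhaustion plus Poincar\'e-type potentials) is a correct way to substantiate that assertion.
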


\begin{proof}
Indeed, $X\setminus\Sigma$ admits a complete
K\"ahler metric, so
this is an immediate application of Theorem \ref{T:mt2}.
\end{proof}

\begin{Remark}
If we suppose only that $c_1(L,h)$ is strictly positive in a neighborhood of $\Sigma$, we can modify the metric $h$ by $he^{-A\rho}$, where $\rho$ is the psh exhaustion function of $X$ and $A$ is an appropriate constant, so that the curvature of the line bundle $(L,he^{-A\rho})$ becomes positive on the whole $X\setminus\Sigma$. Thus $\frac{1}{p}\,\gamma_p\to c_1(L,he^{-A\rho})=c_1(L,h)+A\,dd^c\rho$ as $p\to\infty$ on $X$.
\end{Remark}


\begin{thebibliography}{XXXXX}

\bibitem[BT1]{BT76} E.\ Bedford and B.\ A.\ Taylor, {\em The Dirichlet problem for a complex Monge-Amp\`ere equation}, Invent.\ Math.\ {\bf 37} (1976), 1--44.

\bibitem[BT2]{BT82} E.\ Bedford and B.\ A.\ Taylor, {\em A new capacity for plurisubharmonic functions}, Acta Math.\ {\bf 149} (1982), 1--40.

\bibitem[Bl1]{Bl04} Z.\ B\l ocki, {\em On the definition of the Monge-Amp\`ere operator in ${\mathbb C}^2$}, Math.\ Ann.\ {\bf 328} (2004), 415--423.

\bibitem[Bl2]{Bl06} Z.\ B\l ocki, {\em The domain of definition of the complex Monge-Amp\`ere operator}, Amer.\ J.\ Math.\ {\bf 128} (2006), 519--530.

\bibitem[BK]{BK07} Z.\ B\l ocki and S.\ Ko\l odziej, {\em On regularization of plurisubharmonic functions on manifolds}, Proc.\ Amer.\ Math.\ Soc.\ {\bf 135} (2007), 2089--2093.

\bibitem[Ce]{Ce04} U.\ Cegrell, {\em The general definition of the complex Monge-Amp\`ere operator}, Ann.\ Inst.\ Fourier (Grenoble)  {\bf 54} (2004), 159--179.

\bibitem[CGZ]{CGZ08} D.\ Coman, V.\ Guedj, and A.\ Zeriahi, {\em Domains of definition of Monge-Amp\`ere operators on compact K\"ahler manifolds}, Math.\ Z.\ {\bf 259} (2008), 393--418.

\bibitem[CM1]{CM11} D.\ Coman and G.\ Marinescu, {\em Equidistribution results for singular metrics on line bundles}, preprint available at arXiv:1108.5163.

\bibitem[CM2]{CM12} D.\ Coman and G.\ Marinescu, {\em Convergence of Fubini-Study currents for orbifold line bundles},  Internat. J. Math. {\bf 24} (2013), no. 7, 1350051, 27 pp.

\bibitem[D1]{D82I} J.-P.\ Demailly, {\em Courants positifs extr\^emaux et conjecture de Hodge}, Invent.\ Math.\ {\bf 69} (1982), 347--374.

\bibitem[D2]{D90} J.-P.\ Demailly, {\em Singular Hermitian metrics on positive line bundles}, in {\em  Complex algebraic varieties (Bayreuth, 1990)}, Lecture Notes in Math.\ 1507, Springer, Berlin, 1992, 87--104.

\bibitem[D3]{D92} J.-P.\ Demailly, {\em Regularization of closed positive currents and intersection theory}, J.\ Algebraic Geom.\ {\bf 1} (1992), 361--409.

\bibitem[D4]{D93} J.-P.\ Demailly, {\em Monge-Amp\`ere operators, Lelong numbers and intersection theory}, in {\em Complex analysis and geometry}, Plenum, New York, 1993, 115--193.

\bibitem[D5]{D93b} J.\ P.\ Demailly, {\em A numerical criterion for very ample line bundles}, J.\ Differential Geom.\ {\bf 37} (1993), 323--374.

\bibitem[DP]{DP04} J. P. Demailly and M. Paun, {\em Numerical characterization of the K\"ahler cone of a compact K\"ahler manifold}, Ann. of Math. (2) {\bf 159} (2004), 1247--1274.

\bibitem[DMS]{DMS} T.-C.\ Dinh, G.\ Marinescu and V.\ Schmidt, {\em Asymptotic distribution of zeros of holomorphic sections in the non compact setting}, J.\ Stat.\ Phys.\ {\bf 148} (2012), 113--136.

\bibitem[DS]{DS06b} T.-C.\ Dinh and N.\ Sibony, {\em Distribution des valeurs de transformations m\'eromorphes et applications}, Comment.\ Math.\ Helv.\ {\bf 81} (2006), 221--258.

\bibitem[DuS]{DS95} J.\ Duval and N.\ Sibony, {\em Polynomial convexity, rational convexity, and currents}, Duke Math. J. {\bf 79} (1995), 487--513.

\bibitem[FS]{FS95} J.\ E.\ Forn\ae ss and N. Sibony, {\em Oka's inequality for currents and  applications}, Math. Ann. {\bf 301} (1995), 399--419.

\bibitem[G]{G99} V.\ Guedj, {\em Approximation of currents on complex manifolds}, Math. Ann. {\bf 313} (1999), 437--474.

\bibitem[GZ1]{GZ05} V.\ Guedj and A.\ Zeriahi, {\em Intrinsic capacities on compact K\"ahler manifolds}, J. Geom. Anal. {\bf 15} (2005), 607--639.

\bibitem[GZ2]{GZ07} V.\ Guedj and A.\ Zeriahi, {\em The weighted Monge-Amp\`ere energy of quasiplurisubharmonic functions}, J.\ Func.\ Anal.\ {\bf 250} (2007), 442--482.

\bibitem[JS]{JS93} S.\ Ji and B.\ Shiffman, {\em Properties of compact complex manifolds carrying
closed positive currents}, J. Geom. Anal. {\bf 3} (1993), 37--61.

\bibitem[KN]{Kuipers} L.\ Kuipers and H.\ Niederreiter, {\em Uniform distribution of sequences}, Pure and Applied Mathematics, Wiley-Interscience [John Wiley \& Sons], New York-London-Sydney, 1974, xiv+390 pp.

\bibitem[MM1]{MM07} X.\ Ma and G.\ Marinescu, {\em Holomorphic Morse Inequalities and Bergman Kernels}, Progress in Math., vol.\ 254, Birkh\"auser, Basel, 2007, xiii, 422 pp.

\bibitem[MM2]{MM08} X.\ Ma and G.\ Marinescu, {\em Generalized Bergman kernels on symplectic manifolds}, Adv. Math. {\bf 217} (2008), 1756--1815.

\bibitem[O]{O87} T.\ Ohsawa, {\em Hodge spectral sequence and symmetry on compact K\"ahler spaces},  Publ.\ Res.\ Inst.\ Math.\ Sci.\ {\bf 23} (1987), 613--625.

\bibitem[SZ1]{ShZ99} B.\ Shiffman and S.\ Zelditch, {\em Distribution of zeros of random and quantum chaotic sections of positive line bundles}, Comm.\ Math.\ Phys.\ {\bf 200} (1999), 661--683.

\bibitem[SZ2]{ShZ08} B.\ Shiffman and S.\ Zelditch, {\em  Number variance of random zeros on complex manifolds}, Geom.\ Funct.\ Anal.\ {\bf 18} (2008), 1422--1475.

\bibitem[S]{Si85} N. Sibony, {\em Quelques probl\`emes de prolongement de courants en analyse complexe}, Duke Math. J.  {\bf 52}  (1985), 157--197.

\end{thebibliography}
\end{document}